\def\ps@pprintTitle{%
 \let\@oddhead\@empty
 \let\@evenhead\@empty
 \def\@oddfoot{\centerline{\thepage}}%
 \let\@evenfoot\@oddfoot}
\newtheorem{Theorem}{Theorem}
\newtheorem{Lemma}{Lemma}
\newcommand{\N}{\mathbb{N}}
\newcommand{\R}{\mathbb{R}}
\newcommand{\EE}{\mathbb{E}}
\newcommand{\bb}[1]{\boldsymbol{#1}}
\newcommand{\leqdef}{\vcentcolon=}
\DeclareRobustCommand{\brkbinom}{\genfrac\{\}{0pt}{}}
\begin{document}

\begin{frontmatter}

    \title{Moments of the negative multinomial distribution}%

    \author[a1]{Fr\'ed\'eric Ouimet\texorpdfstring{}{)}}%

    \address[a1]{Universit\'e de Montr\'eal, Montreal, QC H3T 1J4, Canada.}%

    \cortext[cor1]{Corresponding author}%
    \ead{frederic.ouimet@umontreal.ca}%

    \begin{abstract}
        The negative multinomial distribution appears in many areas of applications such as polarimetric image processing and the analysis of longitudinal count data. In previous studies, \cite{doi:10.2307/2333745} derived general formulas for the falling factorial moments of the negative multinomial distribution, while \cite{MR3168930} obtained expressions for the cumulants. Despite the availability of the moment generating function, no comprehensive formulas for the moments have been calculated thus far. This paper addresses this gap by presenting general formulas for both central and non-central moments of the negative multinomial distribution. These formulas are expressed in terms of binomial coefficients and Stirling numbers of the second kind. Utilizing these formulas, we provide explicit expressions for all central moments up to the $4^{\text{th}}$~order and all non-central moments up to the $8^{\text{th}}$~order.
    \end{abstract}

    \begin{keyword}
        negative multinomial distribution \sep higher moments \sep central moments \sep non-central moments
        \MSC[2020]{Primary : 62E15 Secondary : 60E05}
    \end{keyword}

\end{frontmatter}

\section{Introduction}\label{sec:intro}

    The negative multinomial distribution is a probability distribution that can be used to model count data, where the outcome of interest is the number of occurrences of $d\in \N$ different events when the number of failures (a failure means that, for a given trial, an object is not categorized in any of the $d$ categories) is a fixed value $r\in \N$. It is a multivariate generalization of the well-known negative binomial distribution, for which $d = 1$.
    For a general reference on the negative multinomial distribution and its properties, refer to \citet{MR171341} or Chapter~36 of \citet{MR1429617}.

    One of the main motivations for using the negative multinomial distribution is its ability to model overdispersion for count vectors, which happens when the variances of the count variables are larger than their mean (\citet{MR2063401}). The Poisson distribution for instance assumes that the mean and variance are equal but, in many real-world scenarios, this is often not the case. The negative multinomial distribution allows for modeling overdispersion by allowing for different variances for each event type (\citet{MR3155491}).

    Another motivation for using the negative multinomial distribution is its ability to handle excess zeros in count data, see, e.g., \citet{MR4481433}. Count data often exhibit zero inflation, where there are more zeros than would be expected under a Poisson distribution. The negative multinomial distribution provides a flexible framework for modeling zero inflation by allowing for different probabilities of zero occurrences for each event type.

    A third motivation for using the negative multinomial distribution is its ability to model count data with multiple event types, see, e.g., \cite{MR801857,doi:10.1080/03610928508829007,doi:10.2307/271020,doi:10.1080/01621459.1999.10474178,MR1681133,MR1475055,MR3610402,MR4242807}. In many applications, there is more than one type of event that can occur and the negative multinomial distribution allows for modeling the counts of each event type simultaneously. This is particularly useful in fields such as marketing, where the goal is to model the number of purchases of different products, or in ecology, where the goal is to model the counts of different species in a community \cite{doi:10.3390/f11050571}.

    Overall, the negative multinomial distribution provides a flexible and powerful tool for modeling count data in a variety of applications such as polarimetric image processing~\cite{MR3037959}, the analysis of RNA-seq.\ data \cite{Kusi-Appiah2016phd}, pollen analysis \cite{doi:10.2307/2333745}, longitudinal data \cite{MR1475055,doi:10.1080/01621459.1999.10474178,MR1681133}, etc. Its theoretical properties have been investigated in numerous papers, see, e.g., \cite{MR207079,MR345311,MR372951,MR408075,MR656170,MR813445,MR1058927,MR1219279,MR2047690,MR2255791,MR2370884,MR3197132,MR3263054,MR3825458,MR4122073,MR4505544}. One can find extensions of the model in \cite{MR208734,doi:10.1080/03610929508831473,Kusi-Appiah2016phd,MR4358040}.

    The ability of the negative multinomial distribution to handle overdispersion, zero inflation, and multiple event types makes it a valuable tool for data scientists and statisticians. Whether one is interested in modeling the number of purchases of different products, the counts of different species in a community, or any other count data, the negative multinomial distribution can provide valuable insights and inform decision-making.

    In previous studies, \citet{doi:10.2307/2333745} derived general formulas for the falling factorial moments of the negative multinomial distribution, while \citet{MR3168930} obtained expressions for the cumulants. Despite the availability of the moment generating function, no comprehensive formulas for the moments have been calculated thus far. Our goal in this paper is to address this gap by presenting general formulas for both central and non-central moments of the negative multinomial distribution.

    Here is an outline of the paper.
    In Section~\ref{sec:preliminaries}, the necessary definitions and notations are introduced, along with a preliminary result on factorial moments of the negative multinomial distribution due to \citet{doi:10.2307/2333745}.
    The general formulas for the central and non-central moments of the negative multinomial distribution are stated and proved in Section~\ref{sec:main.results}.
    The numerical implementation of those formulas in \texttt{Mathematica} is provided in Section~\ref{sec:numerical.codes}.
    Finally, in Section~\ref{sec:explicit.formulas}, our general formulas are applied to give explicit expressions for all central moments up to the fourth~order and all non-central moments up to the eighth~order. Open problems of interest are stated in Section~\ref{sec:open.problems}.

    \section{The Negative Multinomial Distribution}\label{sec:preliminaries}

    For any $d\in \N$, let $\bb{x}\in [0,1]^d$ be such that $\|\bb{x}\|_1 \leqdef \sum_{i=1}^d |x_i| < 1$.
    The probability mass function $\bb{k}\mapsto P_{r,\bb{x}}(\bb{k})$ of the negative multinomial distribution is defined by
    \begin{align}\label{eq:multinomial.probability}
        P_{r,\bb{x}}(\bb{k})
        &\leqdef \frac{\Gamma(r + \|\bb{k}\|_1)}{\Gamma(r) \prod_{i=1}^d \Gamma(k_i + 1)} (1 - \|\bb{x}\|_1)^r \prod_{i=1}^d x_i^{k_i} \notag \\
        &= \frac{\Gamma(r + \|\bb{k}\|_1)}{\Gamma(r) \prod_{i=1}^d \Gamma(k_i + 1)} (1 - \|\bb{x}\|_1)^{r + \|\bb{k}\|_1} \prod_{i=1}^d y_i^{k_i}, \quad \bb{k}\in \N_0^d,
    \end{align}
    where $r > 0$ is a positive real number and $y_i \leqdef x_i / (1 - \|\bb{x}\|_1)$ for all $i\in \{1,\dots,d\}$.
    If a random vector $\bb{\eta} = (\eta_1,\dots,\eta_d)$ follows this distribution, we write for short $\bb{\eta}\sim \mathrm{Neg\hspace{0.3mm}Multinomial}\hspace{0.2mm}(r,\bb{x})$.
    In this paper, our main goal is to give general formulas for the non-central and central moments of \eqref{eq:multinomial.probability}, namely
    \begin{equation}\label{eq:main.goal}
        \EE\left[\prod_{i=1}^d \eta_i^{p_i}\right] \quad \text{and} \quad \EE\left[\prod_{i=1}^d (\eta_i - \EE[\eta_i])^{p_i}\right], \qquad p_1,\dots,p_d\in \N_0.
    \end{equation}

    We obtain the formulas using a combinatorial argument and the general expression for the falling factorial moments found by \citet{doi:10.2307/2333745}, which we register in the lemma below.
    \begin{Lemma}[Factorial moments]\label{lem:factorial.moments}
        Let $\bb{\eta}\sim \mathrm{Neg\hspace{0.3mm}Multinomial}\hspace{0.3mm}(r,\bb{x})$.
        Then, for all $k_1,\dots,k_d\in \N_0$,
        \begin{equation*}\label{eq:factorial.moments}
            \EE\left[\prod_{i=1}^d \eta_i^{(k_i)}\right] = \left(r - 1 + \|\bb{k}\|_1\right)^{(\|\bb{k}\|_1)} \prod_{i=1}^d y_i^{k_i},
        \end{equation*}
        where $m^{(k)} \leqdef m (m-1) \dots (m-k+1)$ denotes the {\it $k^{\text{th}}$~order~falling factorial of $m$}.
    \end{Lemma}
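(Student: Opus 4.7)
The plan is to compute the expectation directly from the probability mass function by exploiting the classical cancellation between falling factorials and Gamma functions, and then recognize what remains as the total mass of another negative multinomial distribution.

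First, I would write
\begin{equation*}
    \EE\!\left[\prod_{i=1}^d \eta_i^{(k_i)}\right]
    = \sum_{\bb{n}\in \N_0^d} \prod_{i=1}^d n_i^{(k_i)} \cdot \frac{\Gamma(r + \|\bb{n}\|_1)}{\Gamma(r) \prod_{i=1}^d \Gamma(n_i + 1)} (1 - \|\bb{x}\|_1)^{r} \prod_{i=1}^d x_i^{n_i},
\end{equation*}
working with the first (unscaled) form of the PMF from \eqref{eq:multinomial.probability}. Since $n_i^{(k_i)} = 0$ whenever $n_i < k_i$, the sum effectively runs over $n_i \geq k_i$, and on that range the identity $n_i^{(k_i)}/\Gamma(n_i+1) = 1/\Gamma(n_i - k_i + 1)$ cancels the falling factorials against the factorial denominators.

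Next, I would make the change of variables $m_i \leqdef n_i - k_i$ for each $i$, which transforms the sum into
\begin{equation*}
    \prod_{i=1}^d x_i^{k_i} \sum_{\bb{m}\in \N_0^d} \frac{\Gamma(r + \|\bb{k}\|_1 + \|\bb{m}\|_1)}{\Gamma(r) \prod_{i=1}^d \Gamma(m_i + 1)} (1 - \|\bb{x}\|_1)^{r} \prod_{i=1}^d x_i^{m_i}.
\end{equation*}
The residual sum is almost the total probability of a $\mathrm{Neg\hspace{0.3mm}Multinomial}(r + \|\bb{k}\|_1,\bb{x})$ law; pulling the factor $\Gamma(r + \|\bb{k}\|_1)/\Gamma(r)$ out front and compensating for the shift in the exponent of $(1 - \|\bb{x}\|_1)$ by a factor $(1 - \|\bb{x}\|_1)^{-\|\bb{k}\|_1}$ makes the sum equal to $1$.

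Finally, I would tidy up: the ratio $\Gamma(r + \|\bb{k}\|_1)/\Gamma(r)$ equals $(r - 1 + \|\bb{k}\|_1)^{(\|\bb{k}\|_1)}$ by the definition of the falling factorial, and $\prod_i x_i^{k_i} (1 - \|\bb{x}\|_1)^{-\|\bb{k}\|_1} = \prod_i y_i^{k_i}$ by the definition of the $y_i$. Combining these yields the claimed formula. There is no real obstacle here; the only delicate step is keeping the Gamma-function bookkeeping straight across the index shift, which is why I would phrase it as a renormalization of the PMF with a bumped index $r \mapsto r + \|\bb{k}\|_1$ rather than attempting a direct term-by-term computation.
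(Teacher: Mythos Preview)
Your argument is correct: the falling-factorial cancellation $n_i^{(k_i)}/\Gamma(n_i+1)=1/\Gamma(n_i-k_i+1)$, the index shift $m_i=n_i-k_i$, and the recognition of the residual sum as the total mass of a $\mathrm{Neg\hspace{0.3mm}Multinomial}(r+\|\bb{k}\|_1,\bb{x})$ law all go through exactly as you describe, and the final identification of $\Gamma(r+\|\bb{k}\|_1)/\Gamma(r)$ with $(r-1+\|\bb{k}\|_1)^{(\|\bb{k}\|_1)}$ and of $\prod_i x_i^{k_i}(1-\|\bb{x}\|_1)^{-\|\bb{k}\|_1}$ with $\prod_i y_i^{k_i}$ is routine.

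The paper, however, does not prove this lemma at all: it simply records it as a known result due to Mosimann~\cite{doi:10.2307/2333745} and uses it as a black box for the subsequent theorems. So there is nothing to compare against; your direct PMF computation is the standard way to establish the formula and would be a perfectly acceptable self-contained replacement for the citation.
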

    The formulas we derive for the expectations in Equation \eqref{eq:main.goal} will be employed to calculate all the central moments up to the fourth order, as well as all the non-central moments up to the eighth order.
    For information about the moment generating function, the cumulant generating function, and expressions for the cumulants, refer to \citet{MR3168930}.

\section{Results}\label{sec:main.results}

    First, we give a general formula for the non-central moments of the negative multinomial distribution in \eqref{eq:multinomial.probability}.

    \begin{Theorem}[Non-central moments]\label{thm:non.central.moments}
        Let $\bb{\eta}\sim \mathrm{Neg\hspace{0.3mm}Multinomial}\hspace{0.3mm}(r,\bb{x})$.
        Then, for all \mbox{$p_1,\dots,p_d\in \N_0$,}
        \begin{equation*}
            \EE\left[\prod_{i=1}^d \eta_i^{p_i}\right] = \sum_{k_1=0}^{p_1} \dots \sum_{k_d=0}^{p_d} \left(r - 1 + \|\bb{k}\|_1\right)^{(\|\bb{k}\|_1)} \prod_{i=1}^d \brkbinom{p_i}{k_i} y_i^{k_i},
        \end{equation*}
        where $\brkbinom{p}{k}$ denotes a Stirling number of the second kind (i.e., the number of ways to partition a set of $p$ objects into $k$ non-empty subsets); recall that
        \begin{equation*}
            y_i \leqdef \frac{x_i}{1 - \|\bb{x}\|_1}, \quad \text{for all } i\in \{1,\dots,d\}.
        \end{equation*}
    \end{Theorem}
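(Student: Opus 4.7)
The plan is to reduce the ordinary powers $\eta_i^{p_i}$ to falling factorials $\eta_i^{(k_i)}$ via the classical combinatorial identity
\begin{equation*}
    n^p = \sum_{k=0}^p \brkbinom{p}{k} n^{(k)}, \qquad n, p \in \N_0,
\end{equation*}
which is the defining relation of the Stirling numbers of the second kind and is equivalent to the statement that any function $f:\{0,1,\dots,p\} \to \R$ with $f(n) = n^p$ admits a unique expansion in the basis of falling factorials. Once this is done, I can directly invoke Lemma~\ref{lem:factorial.moments} on each term of the resulting sum.

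Concretely, I would first apply the identity above separately in each coordinate to obtain
\begin{equation*}
    \prod_{i=1}^d \eta_i^{p_i} = \prod_{i=1}^d \sum_{k_i=0}^{p_i} \brkbinom{p_i}{k_i} \eta_i^{(k_i)}.
\end{equation*}
Then I would expand the product of sums into a single multi-indexed sum,
\begin{equation*}
    \prod_{i=1}^d \eta_i^{p_i} = \sum_{k_1=0}^{p_1} \dots \sum_{k_d=0}^{p_d} \left(\prod_{i=1}^d \brkbinom{p_i}{k_i}\right) \prod_{i=1}^d \eta_i^{(k_i)},
\end{equation*}
take expectations termwise using linearity (the sum is finite, so no convergence issue arises), and substitute
\begin{equation*}
    \EE\left[\prod_{i=1}^d \eta_i^{(k_i)}\right] = \left(r-1+\|\bb{k}\|_1\right)^{(\|\bb{k}\|_1)} \prod_{i=1}^d y_i^{k_i}
\end{equation*}
from Lemma~\ref{lem:factorial.moments}. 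Collecting the $y_i^{k_i}$ factors with the Stirling coefficients yields exactly the claimed formula.

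Given that Lemma~\ref{lem:factorial.moments} does all the probabilistic work, there is no real obstacle: the argument is purely algebraic/combinatorial and each step is a one-line manipulation. The only subtlety worth flagging in the write-up is the justification that the Stirling identity is applied correctly \emph{inside} the expectation when the $\eta_i$ are random rather than deterministic, but this is immediate since the identity holds pointwise for every $\omega$ in the probability space (the $\eta_i$ take values in $\N_0$) and the resulting sum has only finitely many terms, so linearity of expectation applies without restriction.
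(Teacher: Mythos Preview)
Your proposal is correct and follows essentially the same route as the paper: both apply the Stirling identity $x^p = \sum_{k=0}^p \brkbinom{p}{k} x^{(k)}$ coordinatewise, expand the product, use linearity of expectation, and invoke Lemma~\ref{lem:factorial.moments}. Your write-up is in fact slightly more detailed than the paper's, which dispatches the argument in two lines without the pointwise/finiteness remark.
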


    \begin{proof}
        The following well-known relationship between the power $p\in \N_0$ of a number $x\in \R$ and the falling factorials of $x$ is already established:
        \begin{equation*}
            x^p = \sum_{k=0}^p \brkbinom{p}{k} \, x^{(k)}.
        \end{equation*}
        This relationship can be found in \cite{MR1397498} (p. 262). By applying this formula to each $\eta_{i}^{p_{i}}$ and utilizing the linearity of expectation, we obtain the following:
        \begin{equation*}
            \EE\left[\prod_{i=1}^d \eta_i^{p_i}\right] = \sum_{k_1=0}^{p_1} \dots \sum_{k_d=0}^{p_d} \brkbinom{p_1}{k_1} \dots \brkbinom{p_d}{k_d} \, \EE\left[\prod_{i=1}^d \eta_i^{(k_i)}\right],
        \end{equation*}
        Therefore, the conclusion is a direct consequence of Lemma~\ref{lem:factorial.moments}.
    \end{proof}

    We can now derive a comprehensive formula for the central moments of the negative multinomial distribution.

    \begin{Theorem}[Central moments]\label{thm:central.moments}
        Let $\bb{\eta}\sim \mathrm{Neg\hspace{0.3mm}Multinomial}\hspace{0.3mm}(r,\bb{x})$.
        Then, for all $p_1,\dots,p_d\in \N_0$,
        \begin{equation*}
            \begin{aligned}
                \EE\left[\prod_{i=1}^d (\eta_i - \EE[\eta_i])^{p_i}\right]
                &= \sum_{\ell_1=0}^{p_1} \dots \sum_{\ell_d=0}^{p_d} \sum_{k_1=0}^{\ell_1} \dots \sum_{k_d=0}^{\ell_d} \left(r - 1 + \|\bb{k}\|_1\right)^{(\|\bb{k}\|_1)} \\
                &\quad\cdot (-r)^{\sum_{i=1}^d (p_i - \ell_i)} \prod_{i=1}^d \binom{p_i}{\ell_i} \brkbinom{\ell_i}{k_i} y_i^{p_i - \ell_i + k_i},
            \end{aligned}
        \end{equation*}
        where $\binom{p}{\ell}$ denotes the binomial coefficient $\frac{p!}{\ell! (p - \ell)!}$; recall that
        \begin{equation*}
            y_i \leqdef \frac{x_i}{1 - \|\bb{x}\|_1}, \quad \text{for all } i\in \{1,\dots,d\}.
        \end{equation*}
    \end{Theorem}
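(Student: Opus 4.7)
The plan is to reduce the central-moment computation directly to the non-central moment formula already proved in Theorem~\ref{thm:non.central.moments}, via the binomial theorem. The only auxiliary fact needed is the identification of $\EE[\eta_i]$, which follows from Lemma~\ref{lem:factorial.moments} applied with $k_i = 1$ and all other $k_j = 0$: since the resulting falling factorial is $r^{(1)} = r$, we get $\EE[\eta_i] = r\,y_i$. So every shift $\eta_i - \EE[\eta_i]$ is really $\eta_i - r\,y_i$, and the outer sums over $\ell_i$ in the target formula, together with the factor $(-r)^{\sum_i(p_i-\ell_i)}\prod_i \binom{p_i}{\ell_i} y_i^{p_i-\ell_i}$, are exactly what one expects from binomial expansion.

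First I would apply the binomial theorem coordinate by coordinate, writing
\begin{equation*}
    \prod_{i=1}^d (\eta_i - r\,y_i)^{p_i}
    = \sum_{\ell_1=0}^{p_1} \cdots \sum_{\ell_d=0}^{p_d} \left\{\prod_{i=1}^d \binom{p_i}{\ell_i} (-r\,y_i)^{p_i - \ell_i}\right\} \prod_{i=1}^d \eta_i^{\ell_i},
\end{equation*}
then pull the deterministic factors out of the expectation by linearity. This leaves me needing to compute $\EE\bigl[\prod_{i=1}^d \eta_i^{\ell_i}\bigr]$, which is precisely the content of Theorem~\ref{thm:non.central.moments}; substituting its formula introduces the inner sums over $k_1,\dots,k_d$, the falling factorial $(r-1+\|\bb{k}\|_1)^{(\|\bb{k}\|_1)}$, the Stirling factors $\brkbinom{\ell_i}{k_i}$, and the extra powers $y_i^{k_i}$.

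Finally I would collect the $y_i$ exponents: the deterministic part contributes $y_i^{p_i - \ell_i}$ and the non-central moment contributes $y_i^{k_i}$, giving the stated $y_i^{p_i-\ell_i+k_i}$; the signs $(-1)^{p_i-\ell_i}$ and factors of $r$ aggregate into the global $(-r)^{\sum_i (p_i-\ell_i)}$. Matching term-by-term against the target expression completes the proof.

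There is no real obstacle here: the argument is essentially bookkeeping, and the main thing to watch is keeping the two layers of indices $(\ell_i)$ (from the binomial expansion of the centering) and $(k_i)$ (from the Stirling expansion in Theorem~\ref{thm:non.central.moments}) separate, together with the bounds $0\le k_i\le \ell_i\le p_i$ that naturally emerge.
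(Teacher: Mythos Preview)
Your proposal is correct and follows essentially the same approach as the paper: the paper's proof likewise computes $\EE[\eta_i]=r\,y_i$, applies the binomial formula to each factor $(\eta_i-\EE[\eta_i])^{p_i}$, pulls the deterministic pieces out by linearity, and then invokes Theorem~\ref{thm:non.central.moments} to finish. Your write-up is in fact slightly more detailed than the paper's, but the underlying argument is identical.
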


    \begin{proof}
        By applying the binomial formula to each factor $(\eta_i - \EE[\eta_i])^{p_i}$ and using the fact that $\EE[\eta_i] = r y_i$ for all $i\in \{1,\dots,d\}$, note that
        \begin{equation*}
            \EE\left[\prod_{i=1}^d (\eta_i - \EE[\eta_i])^{p_i}\right] = \sum_{\ell_1=0}^{p_1} \dots \sum_{\ell_d=0}^{p_d} \EE\left[\prod_{i=1}^d \eta_i^{\ell_i}\right] \cdot \prod_{i=1}^d \binom{p_i}{\ell_i} (-r y_i)^{p_i - \ell_i}.
        \end{equation*}
        Therefore, the conclusion is a direct consequence of Theorem~\ref{thm:non.central.moments}.
    \end{proof}

\section{Numerical Codes}\label{sec:numerical.codes}

    The formulas in Theorems~\ref{thm:non.central.moments} and \ref{thm:central.moments} can be put into practice in \texttt{{Mathematica}} through the following procedure:
    \begin{verbatim}
    NonCentral[r_, x_, p_, d_] :=
      Sum[FactorialPower[r - 1 + Sum[k[i], {i, 1, d}],
      Sum[k[i], {i, 1, d}]] * Product[StirlingS2[p[[i]], k[i]] *
      (x[[i]] / (1 - Sum[x[[i]], {i, 1, d}])) ^ k[i], {i, 1, d}], ##] & @@
      ({k[#], 0, p[[#]]} & /@ Range[d]);
    Central[r_, x_, p_, d_] :=
      Sum[Sum[FactorialPower[r - 1 + Sum[k[i], {i, 1, d}],
      Sum[k[i], {i, 1, d}]] * (-r) ^ Sum[p[[i]] - ell[i], {i, 1, d}]
      * Product[Binomial[p[[i]], ell[i]] * StirlingS2[ell[i], k[i]]
      * (x[[i]] / (1 - Sum[x[[i]], {i, 1, d}])) ^
      (p[[i]] - ell[i] + k[i]), {i, 1, d}], ##] & @@
      ({k[#], 0, ell[#]} & /@ Range[d]), ##] & @@
      ({ell[#], 0, p[[#]]} & /@ Range[d]);
    \end{verbatim}

\section{Explicit Formulas}\label{sec:explicit.formulas}

    In the two subsections below, we calculate (explicitly) all the non-central moments up to the eighth~order and all the central moments up to the fourth~order.
    Here is a table of the Stirling numbers of the second kind that we will use in our~calculations:
    \footnotesize
    \begin{equation*}
        \begin{aligned}
            &\brkbinom{0}{0} = 1, \\[0.5mm]
            &\brkbinom{1}{0} = 0, ~ \brkbinom{1}{1} = 1, \\[0.5mm]
            &\brkbinom{2}{0} = 0, ~ \brkbinom{2}{1} = 1, \brkbinom{2}{2} = 1, \\[0.5mm]
            &\brkbinom{3}{0} = 0, ~ \brkbinom{3}{1} = 1, \brkbinom{3}{2} = 3, ~ \brkbinom{3}{3} = 1, \\[0.5mm]
            &\brkbinom{4}{0} = 0, ~ \brkbinom{4}{1} = 1, \brkbinom{4}{2} = 7, ~ \brkbinom{4}{3} = 6, \brkbinom{4}{4} = 1, \\[0.5mm]
            &\brkbinom{5}{0} = 0, ~ \brkbinom{5}{1} = 1, \brkbinom{5}{2} = 15, ~ \brkbinom{5}{3} = 25, \brkbinom{5}{4} = 10, ~ \brkbinom{5}{5} = 1, \\[0.5mm]
            &\brkbinom{6}{0} = 0, ~ \brkbinom{6}{1} = 1, \brkbinom{6}{2} = 31, ~ \brkbinom{6}{3} = 90, \brkbinom{6}{4} = 65, ~ \brkbinom{6}{5} = 15, \brkbinom{6}{6} = 1, \\[0.5mm]
            &\brkbinom{7}{0} = 0, ~ \brkbinom{7}{1} = 1, \brkbinom{7}{2} = 63, ~ \brkbinom{7}{3} = 301, \brkbinom{7}{4} = 350, ~ \brkbinom{7}{5} = 140, \brkbinom{7}{6} = 21, ~ \brkbinom{7}{7} = 1, \\[0.5mm]
            &\brkbinom{8}{0} = 0, ~ \brkbinom{8}{1} = 1, \brkbinom{8}{2} = 127, ~ \brkbinom{8}{3} = 966, \brkbinom{8}{4} = 1701, ~ \brkbinom{8}{5} = 1050, \brkbinom{8}{6} = 266, ~ \brkbinom{8}{7} = 28, ~ \brkbinom{8}{8} = 1.
        \end{aligned}
    \end{equation*}
    \normalsize

    \vspace{1mm}
    \subsection{Computation of the Non-Central Moments up to the Eighth~Order}\label{sec:noncentral}

    By utilizing the general expression outlined in Theorem~\ref{thm:non.central.moments} and eliminating the Stirling numbers $\brkbinom{p_i}{k_i}$ that are equal to zero, we obtain the following results effortlessly.

    \vspace{3mm}
    \noindent
    \underline{1st order:} For $j_1\in \{1,\dots,d\}$,
    \begin{align*}
        \EE[\eta_{j_1}]
        &= y_{j_1} r.
    \end{align*}

    \vspace{2mm}
    \noindent
    \underline{2nd order:} For different $j_1, j_2\in \{1,\dots,d\}$,
    \begin{align*}
        \EE[\eta_{j_1}^2]
        &= y_{j_1} \big[r + (r + 1)^{(2)} y_{j_1}\big], \\[1.1mm]
        \EE[\eta_{j_1} \eta_{j_2}]
        &= y_{j_1} y_{j_2} (r + 1)^{(2)}.
    \end{align*}

    \vspace{2mm}
    \noindent
    \underline{3rd order:} For different $j_1, j_2, j_3\in \{1,\dots,d\}$,
    \begin{align*}
        \EE[\eta_{j_1}^3]
        &= y_{j_1} \big[r + 3 (r + 1)^{(2)} y_{j_1} + (r + 2)^{(3)} y_{j_1}^2\big], \\[1.1mm]
        \EE[\eta_{j_1}^2 \eta_{j_2}]
        &= y_{j_1} y_{j_2} \big[(r + 1)^{(2)} + (r + 2)^{(3)} y_{j_1}\big], \\[1.1mm]
        \EE[\eta_{j_1} \eta_{j_2} \eta_{j_3}]
        &= y_{j_1} y_{j_2} y_{j_3} (r + 2)^{(3)}.
    \end{align*}

    \vspace{2mm}
    \noindent
    \underline{4th order:} For different $j_1, j_2, j_3, j_4\in \{1,\dots,d\}$,
    \begin{align*}
        \EE[\eta_{j_1}^4]
        &= y_{j_1} \big[r + 7 (r + 1)^{(2)} y_{j_1} + 6 (r + 2)^{(3)} y_{j_1}^2 + (r + 3)^{(4)} y_{j_1}^3\big], \\[1.1mm]
        \EE[\eta_{j_1}^3 \eta_{j_2}]
        &= y_{j_1} y_{j_2} \big[(r + 1)^{(2)} + 3 (r + 2)^{(3)} y_{j_1} + (r + 3)^{(4)} y_{j_1}^2\big], \\[1.1mm]
        \EE[\eta_{j_1}^2 \eta_{j_2}^2]
        &= y_{j_1} y_{j_2} \big[(r + 1)^{(2)} + (r + 2)^{(3)} (y_{j_1} + y_{j_2}) + (r + 3)^{(4)} y_{j_1} y_{j_2}\big], \\[1.1mm]
        \EE[\eta_{j_1}^2 \eta_{j_2} \eta_{j_3}]
        &= y_{j_1} y_{j_2} y_{j_3} \big[(r + 2)^{(3)} + (r + 3)^{(4)} y_{j_1}\big], \\[1.1mm]
        \EE[\eta_{j_1} \eta_{j_2} \eta_{j_3} \eta_{j_4}]
        &= y_{j_1} y_{j_2} y_{j_3} y_{j_4} (r + 3)^{(4)}.
    \end{align*}

    \vspace{2mm}
    \noindent
    \underline{5th order:} For different $j_1, j_2, j_3, j_4, j_5\in \{1,\dots,d\}$,
    \begin{align*}
        \EE[\eta_{j_1}^5]
        &= y_{j_1} \left[\hspace{-1mm}
            \begin{array}{l}
                r + 15 (r + 1)^{(2)} y_{j_1} + 25 (r + 2)^{(3)} y_{j_1}^2 \\[0.49mm]
                + 10 (r + 3)^{(4)} y_{j_1}^3 + (r + 4)^{(5)} y_{j_1}^4
            \end{array}
            \hspace{-1mm}\right], \\[1.1mm]
        \EE[\eta_{j_1}^4 \eta_{j_2}]
        &= y_{j_1} y_{j_2} \left[\hspace{-1mm}
            \begin{array}{l}
                (r + 1)^{(2)} + 7 (r + 2)^{(3)} y_{j_1} \\[0.49mm]
                + 6 (r + 3)^{(4)} y_{j_1}^2 + (r + 4)^{(5)} y_{j_1}^3
            \end{array}
            \hspace{-1mm}\right], \\[1.1mm]
        \EE[\eta_{j_1}^3 \eta_{j_2}^2]
        &= y_{j_1} y_{j_2} \left[\hspace{-1mm}
            \begin{array}{l}
                (r + 1)^{(2)} + (r + 2)^{(3)} (3 y_{j_1} + y_{j_2}) \\[0.49mm]
                + (r + 3)^{(4)} (y_{j_1}^2 + 3 y_{j_1} y_{j_2}) + (r + 4)^{(5)} y_{j_1}^2 y_{j_2}
            \end{array}
            \hspace{-1mm}\right], \\[1.1mm]
        \EE[\eta_{j_1}^3 \eta_{j_2} \eta_{j_3}]
        &= y_{j_1} y_{j_2} y_{j_3} \big[(r + 2)^{(3)} + 3 (r + 3)^{(4)} y_{j_1} + (r + 4)^{(5)} y_{j_1}^2\big], \\[1.1mm]
        \EE[\eta_{j_1}^2 \eta_{j_2}^2 \eta_{j_3}]
        &= y_{j_1} y_{j_2} y_{j_3} \big[(r + 2)^{(3)} + (r + 3)^{(4)} (y_{j_1} + y_{j_2}) + (r + 4)^{(5)} y_{j_1} y_{j_2}\big], \\[1.1mm]
        \EE[\eta_{j_1}^2 \eta_{j_2} \eta_{j_3} \eta_{j_4}]
        &= y_{j_1} y_{j_2} y_{j_3} y_{j_4} \big[(r + 3)^{(4)} + (r + 4)^{(5)} y_{j_1}\big], \\[1.1mm]
        \EE[\eta_{j_1} \eta_{j_2} \eta_{j_3} \eta_{j_4} \eta_{j_5}]
        &= y_{j_1} y_{j_2} y_{j_3} y_{j_4} y_{j_5} (r + 4)^{(5)}.
    \end{align*}

    \vspace{2mm}
    \noindent
    \underline{6th order:} For different $j_1, j_2, j_3, j_4, j_5, j_6\in \{1,\dots,d\}$,
    \begin{align*}
        \EE[\eta_{j_1}^6]
        &= y_{j_1} \left[\hspace{-1mm}
            \begin{array}{l}
                r + 31 (r + 1)^{(2)} y_{j_1} + 90 (r + 2)^{(3)} y_{j_1}^2 \\[0.49mm]
                + 65 (r + 3)^{(4)} y_{j_1}^3 + 15 (r + 4)^{(5)} y_{j_1}^4 + (r + 5)^{(6)} y_{j_1}^5
            \end{array}
            \hspace{-1mm}\right], \\[1.1mm]
        \EE[\eta_{j_1}^5 \eta_{j_2}]
        &= y_{j_1} y_{j_2} \left[\hspace{-1mm}
            \begin{array}{l}
                (r + 1)^{(2)} + 15 (r + 2)^{(3)} y_{j_1} + 25 (r + 3)^{(4)} y_{j_1}^2 \\[0.49mm]
                + 10 (r + 4)^{(5)} y_{j_1}^3 + (r + 5)^{(6)} y_{j_1}^4
            \end{array}
            \hspace{-1mm}\right], \\[1.1mm]
        \EE[\eta_{j_1}^4 \eta_{j_2}^2]
        &= y_{j_1} y_{j_2} \left[\hspace{-1mm}
            \begin{array}{l}
                (r + 1)^{(2)} + (r + 2)^{(3)} (7 y_{j_1} + y_{j_2}) \\[0.49mm]
                + (r + 3)^{(4)} (6 y_{j_1}^2 + 7 y_{j_1} y_{j_2}) \\[0.49mm]
                + (r + 4)^{(5)} (y_{j_1}^3 + 6 y_{j_1}^2 y_{j_2}) + (r + 5)^{(6)} y_{j_1}^3 y_{j_2}
            \end{array}
            \hspace{-1mm}\right], \\[1.1mm]
        \EE[\eta_{j_1}^4 \eta_{j_2} \eta_{j_3}]
        &= y_{j_1} y_{j_2} y_{j_3} \left[\hspace{-1mm}
            \begin{array}{l}
                (r + 2)^{(3)} + 7 (r + 3)^{(4)} y_{j_1} \\[0.49mm]
                + 6 (r + 4)^{(5)} y_{j_1}^2 + (r + 5)^{(6)} y_{j_1}^3
            \end{array}
            \hspace{-1mm}\right], \\[1.1mm]
        \EE[\eta_{j_1}^3 \eta_{j_2}^3]
        &= y_{j_1} y_{j_2} \left[\hspace{-1mm}
            \begin{array}{l}
                (r + 1)^{(2)} + (r + 2)^{(3)} (3 y_{j_1} + 3 y_{j_2}) \\[0.49mm]
                + (r + 3)^{(4)} (y_{j_1}^2 + 9 y_{j_1} y_{j_2} + y_{j_2}^2) \\[0.49mm]
                + (r + 4)^{(5)} (3 y_{j_1}^2 y_{j_2} + 3 y_{j_1} y_{j_2}^2) + (r + 5)^{(6)} y_{j_1}^2 y_{j_2}^2
            \end{array}
            \hspace{-1mm}\right], \\[1.1mm]
        \EE[\eta_{j_1}^3 \eta_{j_2}^2 \eta_{j_3}]
        &= y_{j_1} y_{j_2} y_{j_3} \left[\hspace{-1mm}
            \begin{array}{l}
                (r + 2)^{(3)} + (r + 3)^{(4)} (3 y_{j_1} + y_{j_2}) \\[0.49mm]
                + (r + 4)^{(5)} (y_{j_1}^2 + 3 y_{j_1} y_{j_2}) + (r + 5)^{(6)} y_{j_1}^2 y_{j_2}
            \end{array}
            \hspace{-1mm}\right], \\[1.1mm]
        \EE[\eta_{j_1}^3 \eta_{j_2} \eta_{j_3} \eta_{j_4}]
        &= y_{j_1} y_{j_2} y_{j_3} y_{j_4} \big[(r + 3)^{(4)} + 3 (r + 4)^{(5)} y_{j_1} + (r + 5)^{(6)} y_{j_1}^2\big], \\[1.1mm]
        \EE[\eta_{j_1}^2 \eta_{j_2}^2 \eta_{j_3}^2]
        &= y_{j_1} y_{j_2} y_{j_3} \left[\hspace{-1mm}
            \begin{array}{l}
                (r + 2)^{(3)} + (r + 3)^{(4)} (y_{j_1} + y_{j_2} + y_{j_3}) \\[0.49mm]
                + (r + 4)^{(5)} (y_{j_1} y_{j_2} + y_{j_1} y_{j_3} + y_{j_2} y_{j_3}) + (r + 5)^{(6)} y_{j_1} y_{j_2} y_{j_3}
            \end{array}
            \hspace{-1mm}\right], \\[1.1mm]
        \EE[\eta_{j_1}^2 \eta_{j_2}^2 \eta_{j_3} \eta_{j_4}]
        &= y_{j_1} y_{j_2} y_{j_3} y_{j_4} \big[(r + 3)^{(4)} + (r + 4)^{(5)} (y_{j_1} + y_{j_2}) + (r + 5)^{(6)} y_{j_1} y_{j_2}\big], \\[1.1mm]
        \EE[\eta_{j_1}^2 \eta_{j_2} \eta_{j_3} \eta_{j_4} \eta_{j_5}]
        &= y_{j_1} y_{j_2} y_{j_3} y_{j_4} y_{j_5} \big[(r + 4)^{(5)} + (r + 5)^{(6)} y_{j_1}\big], \\[1.1mm]
        \EE[\eta_{j_1} \eta_{j_2} \eta_{j_3} \eta_{j_4} \eta_{j_5} \eta_{j_6}]
        &= y_{j_1} y_{j_2} y_{j_3} y_{j_4} y_{j_5} y_{j_6} (r + 5)^{(6)}.
    \end{align*}

    \vspace{2mm}
    \noindent
    \underline{7th order:} For different $j_1, j_2, j_3, j_4, j_5, j_6, j_7\in \{1,\dots,d\}$,
    \begin{align*}
        \EE[\eta_{j_1}^7]
        &= y_{j_1} \left[\hspace{-1mm}
            \begin{array}{l}
                r + 63 (r + 1)^{(2)} y_{j_1} + 301 (r + 2)^{(3)} y_{j_1}^2 \\[0.49mm]
                + 350 (r + 3)^{(4)} y_{j_1}^3 + 140 (r + 4)^{(5)} y_{j_1}^4 \\[0.49mm]
                + 21 (r + 5)^{(6)} y_{j_1}^5 + (r + 6)^{(7)} y_{j_1}^6
            \end{array}
            \hspace{-1mm}\right], \\[1.1mm]
        \EE[\eta_{j_1}^6 \eta_{j_2}]
        &= y_{j_1} y_{j_2} \left[\hspace{-1mm}
            \begin{array}{l}
                (r + 1)^{(2)} + 31 (r + 2)^{(3)} y_{j_1} + 90 (r + 3)^{(4)} y_{j_1}^2 \\[0.49mm]
                + 65 (r + 4)^{(5)} y_{j_1}^3 + 15 (r + 5)^{(6)} y_{j_1}^4 + (r + 6)^{(7)} y_{j_1}^5
            \end{array}
            \hspace{-1mm}\right], \\[1.1mm]
        \EE[\eta_{j_1}^5 \eta_{j_2}^2]
        &= y_{j_1} y_{j_2} \left[\hspace{-1mm}
            \begin{array}{l}
                (r + 1)^{(2)} + (r + 2)^{(3)} (15 y_{j_1} + y_{j_2}) \\[0.49mm]
                + (r + 3)^{(4)} (25 y_{j_1}^2 + 15 y_{j_1} y_{j_2}) \\[0.49mm]
                + (r + 4)^{(5)} (10 y_{j_1}^3 + 25 y_{j_1}^2 y_{j_2}) \\[0.49mm]
                + (r + 5)^{(6)} (y_{j_1}^4 + 10 y_{j_1}^3 y_{j_2}) + (r + 6)^{(7)} y_{j_1}^4 y_{j_2}
            \end{array}
            \hspace{-1mm}\right], \\[1.1mm]
        \EE[\eta_{j_1}^5 \eta_{j_2} \eta_{j_3}]
        &= y_{j_1} y_{j_2} y_{j_3} \left[\hspace{-1mm}
            \begin{array}{l}
                (r + 2)^{(3)} + 15 (r + 3)^{(4)} y_{j_1} + 25 (r + 4)^{(5)} y_{j_1}^2 \\[0.49mm]
                + 10 (r + 5)^{(6)} y_{j_1}^3 + (r + 6)^{(7)} y_{j_1}^4
            \end{array}
            \hspace{-1mm}\right], \\[1.1mm]
        \EE[\eta_{j_1}^4 \eta_{j_2}^3]
        &= y_{j_1} y_{j_2} \left[\hspace{-1mm}
            \begin{array}{l}
                (r + 1)^{(2)} + (r + 2)^{(3)} (7 y_{j_1} + 3 y_{j_2}) \\[0.49mm]
                + (r + 3)^{(4)} (6 y_{j_1}^2 + 21 y_{j_1} y_{j_2} + y_{j_2}^2) \\[0.49mm]
                + (r + 4)^{(5)} (y_{j_1}^3 + 18 y_{j_1}^2 y_{j_2} + 7 y_{j_1} y_{j_2}^2) \\[0.49mm]
                + (r + 5)^{(6)} (3 y_{j_1}^3 y_{j_2} + 6 y_{j_1}^2 y_{j_2}^2) + (r + 6)^{(7)} y_{j_1}^3 y_{j_2}^2
            \end{array}
            \hspace{-1mm}\right], \\[1.1mm]
        \EE[\eta_{j_1}^4 \eta_{j_2}^2 \eta_{j_3}]
        &= y_{j_1} y_{j_2} y_{j_3} \left[\hspace{-1mm}
            \begin{array}{l}
                (r + 2)^{(3)} + (r + 3)^{(4)} (7 y_{j_1} + y_{j_2}) \\[0.49mm]
                + (r + 4)^{(5)} (6 y_{j_1}^2 + 7 y_{j_1} y_{j_2}) \\[0.49mm]
                + (r + 5)^{(6)} (y_{j_1}^3 + 6 y_{j_1}^2 y_{j_2}) + (r + 6)^{(7)} y_{j_1}^3 y_{j_2}
            \end{array}
            \hspace{-1mm}\right], \\[1.1mm]
        \EE[\eta_{j_1}^4 \eta_{j_2} \eta_{j_3} \eta_{j_4}]
        &= y_{j_1} y_{j_2} y_{j_3} y_{j_4} \left[\hspace{-1mm}
            \begin{array}{l}
                (r + 3)^{(4)} + 7 (r + 4)^{(5)} y_{j_1} \\[0.49mm]
                + 6 (r + 5)^{(6)} y_{j_1}^2 + (r + 6)^{(7)} y_{j_1}^3
            \end{array}
            \hspace{-1mm}\right], \\[1.1mm]
        \EE[\eta_{j_1}^3 \eta_{j_2}^3 \eta_{j_3}]
        &= y_{j_1} y_{j_2} y_{j_3} \left[\hspace{-1mm}
            \begin{array}{l}
                (r + 2)^{(3)} + (r + 3)^{(4)} (3 y_{j_1} + 3 y_{j_2}) \\[0.49mm]
                + (r + 4)^{(5)} (y_{j_1}^2 + 9 y_{j_1} y_{j_2} + y_{j_2}^2) \\[0.49mm]
                + (r + 5)^{(6)} (3 y_{j_1}^2 y_{j_2} + 3 y_{j_1} y_{j_2}^2) + (r + 6)^{(7)} y_{j_1}^2 y_{j_2}^2
            \end{array}
            \hspace{-1mm}\right], \\[1.1mm]
        \EE[\eta_{j_1}^3 \eta_{j_2}^2 \eta_{j_3}^2]
        &= y_{j_1} y_{j_2} y_{j_3} \left[\hspace{-1mm}
            \begin{array}{l}
                (r + 2)^{(3)} + (r + 3)^{(4)} (3 y_{j_1} + y_{j_2} + y_{j_3}) \\[0.49mm]
                + (r + 4)^{(5)} (y_{j_1}^2 + 3 y_{j_1} y_{j_2} + 3 y_{j_1} y_{j_3} + y_{j_2} y_{j_3}) \\[0.49mm]
                + (r + 5)^{(6)} (y_{j_1}^2 y_{j_2} + y_{j_1}^2 y_{j_3} + 3 y_{j_1} y_{j_2} y_{j_3}) \\[0.49mm]
                + (r + 6)^{(7)} y_{j_1}^2 y_{j_2} y_{j_3}
            \end{array}
            \hspace{-1mm}\right], \\[1.1mm]
        \EE[\eta_{j_1}^3 \eta_{j_2}^2 \eta_{j_3} \eta_{j_4}]
        &= y_{j_1} y_{j_2} y_{j_3} y_{j_4} \left[\hspace{-1mm}
            \begin{array}{l}
                (r + 3)^{(4)} + (r + 4)^{(5)} (3 y_{j_1} + y_{j_2}) \\[0.49mm]
                + (r + 5)^{(6)} (y_{j_1}^2 + 3 y_{j_1} y_{j_2}) + (r + 6)^{(7)} y_{j_1}^2 y_{j_2}
            \end{array}
            \hspace{-1mm}\right], \\[1.1mm]
        \EE[\eta_{j_1}^3 \eta_{j_2} \eta_{j_3} \eta_{j_4} \eta_{j_5}]
        &= y_{j_1} y_{j_2} y_{j_3} y_{j_4} y_{j_5} \big[(r + 4)^{(5)} + 3 (r + 5)^{(6)} y_{j_1} + (r + 6)^{(7)} y_{j_1}^2\big], \\[1.1mm]
        \EE[\eta_{j_1}^2 \eta_{j_2}^2 \eta_{j_3}^2 \eta_{j_4}]
        &= y_{j_1} y_{j_2} y_{j_3} y_{j_4} \left[\hspace{-1mm}
            \begin{array}{l}
                (r + 3)^{(4)} + (r + 4)^{(5)} (y_{j_1} + y_{j_2} + y_{j_3}) \\[0.49mm]
                + (r + 5)^{(6)} (y_{j_1} y_{j_2} + y_{j_1} y_{j_3} + y_{j_2} y_{j_3}) \\[0.49mm]
                + (r + 6)^{(7)} y_{j_1} y_{j_2} y_{j_3}
            \end{array}
            \hspace{-1mm}\right], \\[1.1mm]
        \EE[\eta_{j_1}^2 \eta_{j_2}^2 \eta_{j_3} \eta_{j_4} \eta_{j_5}]
        &= y_{j_1} y_{j_2} y_{j_3} y_{j_4} y_{j_5} \left[\hspace{-1mm}
            \begin{array}{l}
                (r + 4)^{(5)} + (r + 5)^{(6)} (y_{j_1} + y_{j_2}) \\[0.49mm]
                + (r + 6)^{(7)} y_{j_1} y_{j_2}
            \end{array}
            \hspace{-1mm}\right], \\[1.1mm]
        \EE[\eta_{j_1}^2 \eta_{j_2} \eta_{j_3} \eta_{j_4} \eta_{j_5} \eta_{j_6}]
        &= y_{j_1} y_{j_2} y_{j_3} y_{j_4} y_{j_5} y_{j_6} \big[(r + 5)^{(6)} + (r + 6)^{(7)} y_{j_1}\big], \\[1.1mm]
        \EE[\eta_{j_1} \eta_{j_2} \eta_{j_3} \eta_{j_4} \eta_{j_5} \eta_{j_6} \eta_{j_7}]
        &= y_{j_1} y_{j_2} y_{j_3} y_{j_4} y_{j_5} y_{j_6} y_{j_7} (r + 6)^{(7)}.
    \end{align*}

    \vspace{2mm}
    \noindent
    \underline{8th order:} For different $j_1, j_2, j_3, j_4, j_5, j_6, j_7, j_8\in \{1,\dots,d\}$,
    \begin{align*}
        \EE[\eta_{j_1}^8]
        &= y_{j_1} \left[\hspace{-1mm}
            \begin{array}{l}
                r + 127 (r + 1)^{(2)} y_{j_1} + 966 (r + 2)^{(3)} y_{j_2}^2 \\[0.49mm]
                + 1701 (r + 3)^{(4)} y_{j_1}^3 + 1050 (r + 4)^{(5)} y_{j_1}^4 \\[0.49mm]
                + 266 (r + 5)^{(6)} y_{j_1}^5 + 28 (r + 6)^{(7)} y_{j_1}^6 \\[0.49mm]
                + (r + 7)^{(8)} y_{j_1}^7
            \end{array}
            \hspace{-1mm}\right], \\[1.1mm]
        \EE[\eta_{j_1}^7 \eta_{j_2}]
        &= y_{j_1} y_{j_2} \left[\hspace{-1mm}
            \begin{array}{l}
                r + 63 (r + 2)^{(3)} y_{j_1} + 301 (r + 3)^{(4)} y_{j_1}^2 \\[0.49mm]
                + 350 (r + 4)^{(5)} y_{j_1}^3 + 140 (r + 5)^{(6)} y_{j_1}^4 \\[0.49mm]
                + 21 (r + 6)^{(7)} y_{j_1}^5 + (r + 7)^{(8)} y_{j_1}^6
            \end{array}
            \hspace{-1mm}\right], \\[1.1mm]
        \EE[\eta_{j_1}^6 \eta_{j_2}^2]
        &= y_{j_1} y_{j_2} \left[\hspace{-1mm}
            \begin{array}{l}
                (r + 1)^{(2)} + (r + 2)^{(3)} (31 y_{j_1} + y_{j_2}) \\[0.49mm]
                + (r + 3)^{(4)} (90 y_{j_1}^2 + 31 y_{j_1} y_{j_2}) \\[0.49mm]
                + (r + 4)^{(5)} (65 y_{j_1}^3 + 90 y_{j_1}^2 y_{j_2}) \\[0.49mm]
                + (r + 5)^{(6)} (15 y_{j_1}^4 + 65 y_{j_1}^3 y_{j_2}) \\[0.49mm]
                + (r + 6)^{(7)} (y_{j_1}^5 + 15 y_{j_1}^4 y_{j_2}) + (r + 7)^{(8)} y_{j_1}^5 y_{j_2}
            \end{array}
            \hspace{-1mm}\right], \\[1.1mm]
        \EE[\eta_{j_1}^6 \eta_{j_2} \eta_{j_3}]
        &= y_{j_1} y_{j_2} y_{j_3} \left[\hspace{-1mm}
            \begin{array}{l}
                (r + 2)^{(3)} + 31 (r + 3)^{(4)} y_{j_1} + 90 (r + 4)^{(5)} y_{j_1}^2 \\[0.49mm]
                + 65 (r + 5)^{(6)} y_{j_1}^3 + 15 (r + 6)^{(7)} y_{j_1}^4 + (r + 7)^{(8)} y_{j_1}^5
            \end{array}
            \hspace{-1mm}\right], \\[1.1mm]
        \EE[\eta_{j_1}^5 \eta_{j_2}^3]
        &= y_{j_1} y_{j_2} \left[\hspace{-1mm}
            \begin{array}{l}
                (r + 1)^{(2)} + (r + 2)^{(3)} (15 y_{j_1} + 3 y_{j_2}) \\[0.49mm]
                + (r + 3)^{(4)} (25 y_{j_1}^2 + 45 y_{j_1} y_{j_2} + y_{j_2}^2) \\[0.49mm]
                + (r + 4)^{(5)} (10 y_{j_1}^3 + 75 y_{j_1}^2 y_{j_2} + 15 y_{j_1} y_{j_2}^2) \\[0.49mm]
                + (r + 5)^{(6)} (y_{j_4}^4 + 30 y_{j_1}^3 y_{j_2} + 25 y_{j_1}^2 y_{j_2}^2) \\[0.49mm]
                + (r + 6)^{(7)} (3 y_{j_1}^4 y_{j_2} + 10 y_{j_1}^3 y_{j_2}^2) + (r + 7)^{(8)} y_{j_1}^4 y_{j_2}^2
            \end{array}
            \hspace{-1mm}\right], \\[1.1mm]
        \EE[\eta_{j_1}^5 \eta_{j_2}^2 \eta_{j_3}]
        &= y_{j_1} y_{j_2} y_{j_3} \left[\hspace{-1mm}
            \begin{array}{l}
                (r + 2)^{(3)} + (r + 3)^{(4)} (15 y_{j_1} + y_{j_2}) \\[0.49mm]
                + (r + 4)^{(5)} (25 y_{j_1}^2 + 15 y_{j_1} y_{j_2}) \\[0.49mm]
                + (r + 5)^{(6)} (10 y_{j_1}^3 + 25 y_{j_1}^2 y_{j_2}) \\[0.49mm]
                + (r + 6)^{(7)} (y_{j_1}^4 + 10 y_{j_1}^3 y_{j_2}) + (r + 7)^{(8)} y_{j_1}^4 y_{j_2}
            \end{array}
            \hspace{-1mm}\right], \\[1.1mm]
        \EE[\eta_{j_1}^5 \eta_{j_2} \eta_{j_3} \eta_{j_4}]
        &= y_{j_1} y_{j_2} y_{j_3} y_{j_4} \left[\hspace{-1mm}
            \begin{array}{l}
                (r + 3)^{(4)} + 15 (r + 4)^{(5)} y_{j_1} + 25 (r + 5)^{(6)} y_{j_1}^2 \\[0.49mm]
                + 10 (r + 6)^{(7)} y_{j_1}^3 + (r + 7)^{(8)} y_{j_1}^4
            \end{array}
            \hspace{-1mm}\right], \\[1.1mm]
        \EE[\eta_{j_1}^4 \eta_{j_2}^4]
        &= y_{j_1} y_{j_2} \left[\hspace{-1mm}
            \begin{array}{l}
                (r + 1)^{(2)} + (r + 2)^{(3)} (7 y_{j_1} + 7 y_{j_2}) \\[0.49mm]
                + (r + 3)^{(4)} (6 y_{j_1}^2 + 49 y_{j_1} y_{j_2} + 6 y_{j_2}^2) \\[0.49mm]
                + (r + 4)^{(5)} (y_{j_1}^3 + 42 y_{j_1}^2 y_{j_2} + 42 y_{j_1} y_{j_2}^2 + y_{j_2}^3) \\[0.49mm]
                + (r + 5)^{(6)} (7 y_{j_1}^3 y_{j_2} + 36 y_{j_1}^2 y_{j_2}^2 + 7 y_{j_1} y_{j_2}^3) \\[0.49mm]
                + (r + 6)^{(7)} (6 y_{j_1}^3 y_{j_2}^2 + 6 y_{j_1}^2 y_{j_2}^3) + (r + 7)^{(8)} y_{j_1}^3 y_{j_2}^3
            \end{array}
            \hspace{-1mm}\right], \\[1.1mm]
        \EE[\eta_{j_1}^4 \eta_{j_2}^3 \eta_{j_3}]
        &= y_{j_1} y_{j_2} y_{j_3} \left[\hspace{-1mm}
            \begin{array}{l}
                (r + 2)^{(3)} + (r + 3)^{(4)} (7 y_{j_1} + 3 y_{j_2}) \\[0.49mm]
                + (r + 4)^{(5)} (6 y_{j_1}^2 + 21 y_{j_1} y_{j_2} + y_{j_2}^2) \\[0.49mm]
                + (r + 5)^{(6)} (y_{j_1}^3 + 18 y_{j_1}^2 y_{j_2} + 7 y_{j_1} y_{j_2}^2) \\[0.49mm]
                + (r + 6)^{(7)} (3 y_{j_1}^3 y_{j_2} + 6 y_{j_1}^2 y_{j_2}^2) + (r + 7)^{(8)} y_{j_1}^3 y_{j_2}^2
            \end{array}
            \hspace{-1mm}\right], \\[1.1mm]
        \EE[\eta_{j_1}^4 \eta_{j_2}^2 \eta_{j_3}^2]
        &= y_{j_1} y_{j_2} y_{j_3} \left[\hspace{-1mm}
            \begin{array}{l}
                (r + 2)^{(3)} + (r + 3)^{(4)} (7 y_{j_1} + y_{j_2} + y_{j_3}) \\[0.49mm]
                + (r + 4)^{(5)} (6 y_{j_1}^2 + 7 y_{j_1} y_{j_2} + 7 y_{j_1} y_{j_3} + y_{j_2} y_{j_3}) \\[0.49mm]
                + (r + 5)^{(6)} (y_{j_1}^3 + 6 y_{j_1}^2 y_{j_2} + 6 y_{j_1}^2 y_{j_3} + 7 y_{j_1} y_{j_2} y_{j_3}) \\[0.49mm]
                + (r + 6)^{(7)} (y_{j_1}^3 y_{j_2} + y_{j_1}^3 y_{j_3} + 6 y_{j_1}^2 y_{j_2} y_{j_3}) \\[0.49mm]
                + (r + 7)^{(8)} y_{j_1}^3 y_{j_2} y_{j_3}
            \end{array}
            \hspace{-1mm}\right], \\[1.1mm]
        \EE[\eta_{j_1}^4 \eta_{j_2}^2 \eta_{j_3} \eta_{j_4}]
        &= y_{j_1} y_{j_2} y_{j_3} y_{j_4} \left[\hspace{-1mm}
            \begin{array}{l}
                (r + 3)^{(4)} + (r + 4)^{(5)} (7 y_{j_1} + y_{j_2}) \\[0.49mm]
                + (r + 5)^{(6)} (6 y_{j_1}^2 + 7 y_{j_1} y_{j_2}) \\[0.49mm]
                + (r + 6)^{(7)} (y_{j_1}^3 + 6 y_{j_1}^2 y_{j_2}) + (r + 7)^{(8)} y_{j_1}^3 y_{j_2}
            \end{array}
            \hspace{-1mm}\right], \\[1.1mm]
        \EE[\eta_{j_1}^4 \eta_{j_2} \eta_{j_3} \eta_{j_4} \eta_{j_5}]
        &= y_{j_1} y_{j_2} y_{j_3} y_{j_4} y_{j_5} \left[\hspace{-1mm}
            \begin{array}{l}
                (r + 4)^{(5)} + 7 (r + 5)^{(6)} y_{j_1} + 6 (r + 6)^{(7)} y_{j_1}^2 \\[0.49mm]
                + (r + 7)^{(8)} y_{j_1}^3
            \end{array}
            \hspace{-1mm}\right], \\[1.1mm]
        \EE[\eta_{j_1}^3 \eta_{j_2}^3 \eta_{j_3}^2]
        &= y_{j_1} y_{j_2} y_{j_3} \left[\hspace{-1mm}
            \begin{array}{l}
                (r + 2)^{(3)} + (r + 3)^{(4)} (3 y_{j_1} + 3 y_{j_2} + y_{j_3}) \\[0.49mm]
                + (r + 4)^{(5)} \left(\hspace{-1mm}
                    \begin{array}{l}
                        y_{j_1}^2 + y_{j_2}^2 + 3 y_{j_1} y_{j_3} \\[1mm]
                        + 3 y_{j_2} y_{j_3} + 9 y_{j_1} y_{j_2}
                    \end{array}
                    \hspace{-1mm}\right) \\[0.49mm]
                + (r + 5)^{(6)} \left(\hspace{-1mm}
                    \begin{array}{l}
                        y_{j_1}^2 y_{j_3} + y_{j_2}^2 y_{j_3} + 3 y_{j_1}^2 y_{j_2} \\[1mm]
                        + 3 y_{j_1} y_{j_2}^2 + 9 y_{j_1} y_{j_2} y_{j_3}
                    \end{array}
                    \hspace{-1mm}\right) \\[0.49mm]
                + (r + 6)^{(7)} (y_{j_1}^2 y_{j_2}^2 + 3 y_{j_1}^2 y_{j_2} y_{j_3} + 3 y_{j_1} y_{j_2}^2 y_{j_3}) \\[0.49mm]
                + (r + 7)^{(8)} y_{j_1}^2 y_{j_2}^2 y_{j_3}
            \end{array}
            \hspace{-1mm}\right], \\[1.1mm]
        \EE[\eta_{j_1}^3 \eta_{j_2}^3 \eta_{j_3} \eta_{j_4}]
        &= y_{j_1} y_{j_2} y_{j_3} y_{j_4} \left[\hspace{-1mm}
            \begin{array}{l}
                (r + 3)^{(4)} + (r + 4)^{(5)} (3 y_{j_1} + 3 y_{j_2}) \\[0.49mm]
                + (r + 5)^{(6)} (y_{j_1}^2 + 9 y_{j_1} y_{j_2} + y_{j_2}^2) \\[0.49mm]
                + (r + 6)^{(7)} (3 y_{j_1}^2 y_{j_2} + 3 y_{j_1} y_{j_2}^2) \\[0.49mm]
                + (r + 7)^{(8)} y_{j_1}^2 y_{j_2}^2
            \end{array}
            \hspace{-1mm}\right], \\[1.1mm]
        \EE[\eta_{j_1}^3 \eta_{j_2}^2 \eta_{j_3}^2 \eta_{j_4}]
        &= y_{j_1} y_{j_2} y_{j_3} y_{j_4} \left[\hspace{-1mm}
            \begin{array}{l}
                (r + 3)^{(4)} + (r + 4)^{(5)} (3 y_{j_1} + y_{j_2} + y_{j_3}) \\[0.49mm]
                + (r + 5)^{(6)} (3 y_{j_1} y_{j_2} + 3 y_{j_1} y_{j_3} + y_{j_2} y_{j_3}) \\[0.49mm]
                + (r + 6)^{(7)} (y_{j_1}^2 y_{j_2} + y_{j_1}^2 y_{j_3} + 3 y_{j_1} y_{j_2} y_{j_3}) \\[0.49mm]
                + (r + 7)^{(8)} y_{j_1}^2 y_{j_2} y_{j_3}
            \end{array}
            \hspace{-1mm}\right], \\[1.1mm]
        \EE[\eta_{j_1}^3 \eta_{j_2}^2 \eta_{j_3} \eta_{j_4} \eta_{j_5}]
        &= y_{j_1} y_{j_2} y_{j_3} y_{j_4} y_{j_5} \left[\hspace{-1mm}
            \begin{array}{l}
                (r + 4)^{(5)} + (r + 5)^{(6)} (3 y_{j_1} + y_{j_2}) \\[0.49mm]
                + (r + 6)^{(7)} (y_{j_1}^2 + 3 y_{j_1} y_{j_2}) \\[0.49mm]
                + (r + 7)^{(8)} y_{j_1}^2 y_{j_2}
            \end{array}
            \hspace{-1mm}\right], \\[1.1mm]
        \EE[\eta_{j_1}^3 \eta_{j_2} \eta_{j_3} \eta_{j_4} \eta_{j_5} \eta_{j_6}]
        &= y_{j_1} y_{j_2} y_{j_3} y_{j_4} y_{j_5} y_{j_6} \big[(r + 5)^{(6)} + 3 (r + 6)^{(7)} y_{j_1} + (r + 7)^{(8)} y_{j_1}^2\big], \\[1.1mm]
        \EE[\eta_{j_1}^2 \eta_{j_2}^2 \eta_{j_3}^2 \eta_{j_4}^2]
        &= y_{j_1} y_{j_2} y_{j_3} y_{j_4} \left[\hspace{-1mm}
            \begin{array}{l}
                (r + 3)^{(4)} + (r + 4)^{(5)} (y_{j_1} + y_{j_2} + y_{j_3} + y_{j_4}) \\[0.49mm]
                + (r + 5)^{(6)} \left(\hspace{-1mm}
                    \begin{array}{l}
                        y_{j_1} y_{j_2} + y_{j_1} y_{j_3} + y_{j_1} y_{j_4} \\[1mm]
                        + y_{j_2} y_{j_3} + y_{j_2} y_{j_4} + y_{j_3} y_{j_4}
                    \end{array}
                    \hspace{-1mm}\right) \\[0.49mm]
                + (r + 6)^{(7)} \left(\hspace{-1mm}
                    \begin{array}{l}
                        y_{j_1} y_{j_2} y_{j_3} + y_{j_1} y_{j_2} y_{j_4} \\[1mm]
                        + y_{j_1} y_{j_3} y_{j_4} + y_{j_2} y_{j_3} y_{j_4}
                    \end{array}
                    \hspace{-1mm}\right) \\[0.49mm]
                + (r + 7)^{(8)} y_{j_1} y_{j_2} y_{j_3} y_{j_4}
            \end{array}
            \hspace{-1mm}\right], \\[1.1mm]
        \EE[\eta_{j_1}^2 \eta_{j_2}^2 \eta_{j_3}^2 \eta_{j_4} \eta_{j_5}]
        &= y_{j_1} y_{j_2} y_{j_3} y_{j_4} y_{j_5} \left[\hspace{-1mm}
            \begin{array}{l}
                (r + 4)^{(5)} + (r + 5)^{(6)} (y_{j_1} + y_{j_2} + y_{j_3}) \\[0.49mm]
                + (r + 6)^{(7)} (y_{j_1} y_{j_2} + y_{j_1} y_{j_3} + y_{j_2} y_{j_3}) \\[0.49mm]
                + (r + 7)^{(8)} y_{j_1} y_{j_2} y_{j_3}
            \end{array}
            \hspace{-1mm}\right], \\[1.1mm]
        \EE[\eta_{j_1}^2 \eta_{j_2}^2 \eta_{j_3} \eta_{j_4} \eta_{j_5} \eta_{j_6}]
        &= y_{j_1} y_{j_2} y_{j_3} y_{j_4} y_{j_5} y_{j_6} \left[\hspace{-1mm}
            \begin{array}{l}
                (r + 5)^{(6)} + (r + 6)^{(7)} (y_{j_1} + y_{j_2}) \\[0.49mm]
                + (r + 7)^{(8)} y_{j_1} y_{j_2}
            \end{array}
            \hspace{-1mm}\right], \\[1.1mm]
        \EE[\eta_{j_1}^2 \eta_{j_2} \eta_{j_3} \eta_{j_4} \eta_{j_5} \eta_{j_6} \eta_{j_7}]
        &= y_{j_1} y_{j_2} y_{j_3} y_{j_4} y_{j_5} y_{j_6} y_{j_7} \big[(r + 6)^{(7)} + (r + 7)^{(8)} y_{j_1}\big], \\[1.1mm]
        \EE[\eta_{j_1} \eta_{j_2} \eta_{j_3} \eta_{j_4} \eta_{j_5} \eta_{j_6} \eta_{j_7} \eta_{j_8}]
        &= y_{j_1} y_{j_2} y_{j_3} y_{j_4} y_{j_5} y_{j_6} y_{j_7} y_{j_8} (r + 7)^{(8)}.
    \end{align*}

    \subsection{Computation of the Central Moments up to the Fourth~Order}

    By combining the results of Section~\ref{sec:noncentral} with some algebraic manipulations, we are now able to calculate the central moments explicitly. The simplifications we apply to arrive at the final boxed expressions below were performed using \texttt{Mathematica}. We use a symbolic calculator like \texttt{Mathematica} to do the simplifications because many terms cancel each other out in every expression; it would be virtually impossible to do the simplifications by hand without making mistakes. While our methodology allows us to obtain simplified formulas for the central moments up to any order in principle (assuming we calculate explicit expressions for the appropriate higher order non-central moments in Section~\ref{sec:noncentral}), it would be quite time-consuming for us to input the base formula for the central moments as a function of the non-central moments in \texttt{Mathematica} and let \texttt{Mathematica} do the simplifications beyond the fourth~order. Therefore, for the sake of conciseness, we only present explicit simplified formulas for the central moments up to the fourth~order below. It is worth noting that the numerical formulas we developed in Section~\ref{sec:numerical.codes} are fast for higher orders (i.e., beyond the fourth~order) if the categorical probabilities $x_i$ are known; otherwise, \texttt{Mathematica} has trouble calculating for unknown values of $x_i$'s (i.e., \texttt{Mathematica} 
    has trouble getting simplified general expressions by itself. This is why our approach below~is~necessary.

    \vspace{3mm}
    \noindent
    \underline{2nd order:} For different $j_1, j_2\in \{1,\dots,d\}$,
    \begin{align*}
        \EE[(\eta_{j_1} - \EE[\eta_{j_1}])^2]
        &= \EE[\eta_{j_1}^2] - (\EE[\eta_{j_1}])^2 \\
        &= y_{j_1} \big[r + (r + 1)^{(2)} y_{j_1}\big] - r^2 y_{j_1}^2 \\
        &= \boxed{r y_{j_1} (1 + y_{j_1})} \\[3.1mm]
        \EE[(\eta_{j_1} - \EE[\eta_{j_1}]) (\eta_{j_2} - \EE[\eta_{j_2}])]
        &= \EE[\eta_{j_1} \eta_{j_2}] - \EE[\eta_{j_1}] \EE[\eta_{j_2}] \\
        &= (r + 1)^{(2)} y_{j_1} y_{j_2} - r y_{j_1} r y_{j_2} \\
        &= \boxed{r y_{j_1} y_{j_2}}.
    \end{align*}
\clearpage
    \vspace{2mm}
    \noindent
    \underline{3rd order:} For different $j_1, j_2, j_3\in \{1,\dots,d\}$,
    \begin{align*}
        &\EE[(\eta_{j_1} - \EE[\eta_{j_1}])^3] \\[1.1mm]
        &\qquad= \EE[\eta_{j_1}^3] - 3 \, \EE[\eta_{j_1}^2] \EE[\eta_{j_1}] + 2 \, (\EE[\eta_{j_1}])^3 \notag \\[1.1mm]
        &\qquad= y_{j_1} \big[r + 3 (r + 1)^{(2)} y_{j_1} + (r + 2)^{(3)} y_{j_1}^2\big] - 3 y_{j_1} \big[r + (r + 1)^{(2)} y_{j_1}\big] r y_{j_1} + 2 r^3 y_{j_1}^3 \\
        &\qquad= \boxed{r y_{j_1} (1 + 3 y_{j_1} + 2 y_{j_1}^2)} \\[3.1mm]
        &\EE[(\eta_{j_1} - \EE[\eta_{j_1}])^2 (\eta_{j_2} - \EE[\eta_{j_2}])] \notag \\[1.1mm]
        &\qquad= \EE[\eta_{j_1}^2 \eta_{j_2}] - \EE[\eta_{j_1}^2] \EE[\eta_{j_2}] - 2 \, \EE[\eta_{j_1} \eta_{j_2}] \EE[\eta_{j_1}] + 2 \, (\EE[\eta_{j_1}])^2 \EE[\eta_{j_2}] \notag \\[1.1mm]
        &\qquad= y_{j_1} y_{j_2} \big[(r + 1)^{(2)} + (r + 2)^{(3)} y_{j_1}\big] - y_{j_1} \big[r + (r + 1)^{(2)} y_{j_1}\big] r y_{j_2} \notag \\
        &\qquad\quad- 2 (r + 1)^{(2)} y_{j_1} y_{j_2} r y_{j_1} + 2 r^2 y_{j_1}^2 r y_{j_2} \\
        &\qquad= \boxed{r y_{j_1} (1 + 2 y_{j_1}) y_{j_2}} \\[3.1mm]
        &\EE[(\eta_{j_1} - \EE[\eta_{j_1}]) (\eta_{j_2} - \EE[\eta_{j_2}]) (\eta_{j_3} - \EE[\eta_{j_3}])] \notag \\[1.1mm]
        &\qquad= \EE[\eta_{j_1} \eta_{j_2} \eta_{j_3}] - \EE[\eta_{j_1} \eta_{j_2}] \EE[\eta_{j_3}] - \EE[\eta_{j_1} \eta_{j_3}] \EE[\eta_{j_2}] \notag \\[1.1mm]
        &\qquad\quad- \EE[\eta_{j_2} \eta_{j_3}] \EE[\eta_{j_1}] + 2 \, \EE[\eta_{j_1}] \EE[\eta_{j_2}] \EE[\eta_{j_3}] \notag \\[1.1mm]
        &\qquad= (r + 2)^{(3)} y_{j_1} y_{j_2} y_{j_3} - (r + 1)^{(2)} y_{j_1} y_{j_2} r y_{j_3} - (r + 1)^{(2)} y_{j_1} y_{j_3} r y_{j_2} \notag \\
        &\qquad\quad- (r + 1)^{(2)} y_{j_2} y_{j_3} r y_{j_1} + 2 r^3 y_{j_1} y_{j_2} y_{j_3} \\
        &\qquad= \boxed{2 r y_{j_1} y_{j_2} y_{j_3}}.
    \end{align*}

    \vspace{2mm}
    \noindent
    \underline{4th order:} For different $j_1, j_2, j_3, j_4\in \{1,\dots,d\}$,
    \begin{align*}
        &\EE[(\eta_{j_1} - \EE[\eta_{j_1}])^4] \notag \\[1.1mm]
        &\qquad= \EE[\eta_{j_1}^4] - 4 \, \EE[\eta_{j_1}^3] \EE[\eta_{j_1}] + 6 \, \EE[\eta_{j_1}^2] (\EE[\eta_{j_1}])^2 - 3 \, (\EE[\eta_{j_1}])^4 \notag \\[1.1mm]
        &\qquad= y_{j_1} \big[r + 7 (r + 1)^{(2)} y_{j_1} + 6 (r + 2)^{(3)} y_{j_1}^2 + (r + 3)^{(4)} y_{j_1}^3\big] \notag \\
        &\qquad\quad- 4 y_{j_1} \big[r + 3 (r + 1)^{(2)} y_{j_1} + (r + 2)^{(3)} y_{j_1}^2\big] r y_{j_1} \notag \\
        &\qquad\quad+ 6 y_{j_1} \big[r + (r + 1)^{(2)} y_{j_1}\big] (r y_{j_1})^2 - 3 r^4 y_{j_1}^4 \\
        &\qquad= \boxed{r y_{j_1} (1 + y_{j_1}) (1 + 3 (2 + r) y_{j_1} + 3 (2 + r) y_{j_1}^2)} \\[3.1mm]
        &\EE[(\eta_{j_1} - \EE[\eta_{j_1}])^3 (\eta_{j_2} - \EE[\eta_{j_2}])] \notag \\[1.1mm]
        &\qquad= \EE[\eta_{j_1}^3 \eta_{j_2}] - \EE[\eta_{j_1}^3] \EE[\eta_{j_2}] - 3 \, \EE[\eta_{j_1}^2 \eta_{j_2}] \EE[\eta_{j_1}] + 3 \, \EE[\eta_{j_1}^2] \EE[\eta_{j_1}] \EE[\eta_{j_2}] \notag \\[1.1mm]
        &\qquad\quad+ 3 \, \EE[\eta_{j_1} \eta_{j_2}] (\EE[\eta_{j_1}])^2 - 3 \, (\EE[\eta_{j_1}])^3 \EE[\eta_{j_2}] \notag \\[1.1mm]
        &\qquad= y_{j_1} y_{j_2} \big[(r + 1)^{(2)} + 3 (r + 2)^{(3)} y_{j_1} + (r + 3)^{(4)} y_{j_1}^2\big] \notag \\
        &\qquad\quad- y_{j_1} \big[r + 3 (r + 1)^{(2)} y_{j_1} + (r + 2)^{(3)} y_{j_1}^2\big] r y_{j_2} \notag \\
        &\qquad\quad- 3 y_{j_1} y_{j_2} \big[(r + 1)^{(2)} + (r + 2)^{(3)} y_{j_1}\big] r y_{j_1} + 3 y_{j_1} \big[r + (r + 1)^{(2)} y_{j_1}\big] r y_{j_1} r y_{j_2} \notag \\
        &\qquad\quad+ 3 (r + 1)^{(2)} y_{j_1} y_{j_2} r^2 y_{j_1}^2 - 3 r^3 y_{j_1}^3 r y_{j_2} \\
        &\qquad= \boxed{r y_{j_1} (1 + 3 (2 + r) y_{j_1} + 3 (2 + r) y_{j_1}^2) y_{j_2}} \\[3.1mm]
        &\EE[(\eta_{j_1} - \EE[\eta_{j_1}])^2 (\eta_{j_2} - \EE[\eta_{j_2}])^2] \notag \\[1.1mm]
        &\qquad= \EE[\eta_{j_1}^2 \eta_{j_2}^2] - 2 \, \EE[\eta_{j_1}^2 \eta_{j_2}] \EE[\eta_{j_2}] - 2 \, \EE[\eta_{j_1} \eta_{j_2}^2] \EE[\eta_{j_1}] + \EE[\eta_{j_1}^2] (\EE[\eta_{j_2}])^2 + \EE[\eta_{j_2}^2] (\EE[\eta_{j_1}])^2 \notag \\[1.1mm]
        &\qquad\quad+ 4 \, \EE[\eta_{j_1} \eta_{j_2}] \EE[\eta_{j_1}] \EE[\eta_{j_2}] - 3 \, (\EE[\eta_{j_1}])^2 (\EE[\eta_{j_2}])^2 \notag \\[1.1mm]
        &\qquad= y_{j_1} y_{j_2} \big[(r + 1)^{(2)} + (r + 2)^{(3)} (y_{j_1} + y_{j_2}) + (r + 3)^{(4)} y_{j_1} y_{j_2}\big] \notag \\
        &\qquad\quad- 2 y_{j_1} y_{j_2} \big[(r + 1)^{(2)} + (r + 2)^{(3)} y_{j_1}\big] r y_{j_2} - 2 y_{j_1} y_{j_2} \big[(r + 1)^{(2)} + (r + 2)^{(3)} y_{j_2}\big] r y_{j_1} \notag \\
        &\qquad\quad+ y_{j_1} \big[r + (r + 1)^{(2)} y_{j_1}\big] r^2 y_{j_2}^2 + y_{j_2} \big[r + (r + 1)^{(2)} y_{j_2}\big] r^2 y_{j_1}^2 \notag \\
        &\qquad\quad+ 4 (r + 1)^{(2)} y_{j_1} y_{j_2} r y_{j_1} r y_{j_2} - 3 \, r^2 y_{j_1}^2 r^2 y_{j_2}^2 \\
        &\qquad= \boxed{r y_{j_1} y_{j_2} (1 + 2 y_{j_2} + y_{j_1} (2 + 6 y_{j_2}) + r (1 + y_{j_1} + y_{j_2} + 3 y_{j_1} y_{j_2}))} \\[3.1mm]
        &\EE[(\eta_{j_1} - \EE[\eta_{j_1}])^2 (\eta_{j_2} - \EE[\eta_{j_2}]) (\eta_{j_3} - \EE[\eta_{j_3}])] \notag \\[1.1mm]
        &\qquad= \EE[\eta_{j_1}^2 \eta_{j_2} \eta_{j_3}] - \EE[\eta_{j_1}^2 \eta_{j_2}] \EE[\eta_{j_3}] - \EE[\eta_{j_1}^2 \eta_{j_3}] \EE[\eta_{j_2}] - 2 \, \EE[\eta_{j_1} \eta_{j_2} \eta_{j_3}] \EE[\eta_{j_1}] \notag \\[1.1mm]
        &\qquad\quad+ \EE[\eta_{j_1}^2] \EE[\eta_{j_2}] \EE[\eta_{j_3}] + 2 \, \EE[\eta_{j_1} \eta_{j_2}] \EE[\eta_{j_1}] \EE[\eta_{j_3}] + 2 \, \EE[\eta_{j_1} \eta_{j_3}] \EE[\eta_{j_1}] \EE[\eta_{j_2}] \notag \\[1.1mm]
        &\qquad\quad+ \EE[\eta_{j_2} \eta_{j_3}] (\EE[\eta_{j_1}])^2 - 3 \, (\EE[\eta_{j_1}])^2 \EE[\eta_{j_2}] \EE[\eta_{j_3}] \notag \\[1.1mm]
        &\qquad= y_{j_1} y_{j_2} y_{j_3} \big[(r + 2)^{(3)} + (r + 3)^{(4)} y_{j_1}\big] - y_{j_1} y_{j_2} \big[(r + 1)^{(2)} + (r + 2)^{(3)} y_{j_1}] r y_{j_3} \notag \\
        &\qquad\quad- y_{j_1} y_{j_3} \big[(r + 1)^{(2)} + (r + 2)^{(3)} y_{j_1}\big] r y_{j_2} - 2 (r + 2)^{(3)} y_{j_1} y_{j_2} y_{j_3} r y_{j_1} \notag \\
        &\qquad\quad+ y_{j_1} \big[r + (r + 1)^{(2)} y_{j_1}\big] r y_{j_2} r y_{j_3} + 2 (r + 1)^{(2)} y_{j_1} y_{j_2} r y_{j_1} r y_{j_3} \notag \\
        &\qquad\quad+ 2 (r + 1)^{(2)} y_{j_1} y_{j_3} r y_{j_1} r y_{j_2} + (r + 1)^{(2)} y_{j_2} y_{j_3} r^2 y_{j_1}^2 - 3 r^2 y_{j_1}^2 r y_{j_2} r y_{j_3} \\
        &\qquad= \boxed{r (2 + r) y_{j_1} (1 + 3 y_{j_1}) y_{j_2} y_{j_3}} \\[3.1mm]
        &\EE[(\eta_{j_1} - \EE[\eta_{j_1}]) (\eta_{j_2} - \EE[\eta_{j_2}]) (\eta_{j_3} - \EE[\eta_{j_3}]) (\eta_{j_4} - \EE[\eta_{j_4}])] \notag \\[1.1mm]
        &\qquad= \EE[\eta_{j_1} \eta_{j_2} \eta_{j_3} \eta_{j_4}] - \EE[\eta_{j_1} \eta_{j_2} \eta_{j_3}] \EE[\eta_{j_4}] - \EE[\eta_{j_1} \eta_{j_2} \eta_{j_4}] \EE[\eta_{j_3}] - \EE[\eta_{j_1} \eta_{j_3} \eta_{j_4}] \EE[\eta_{j_2}] \notag \\[1.1mm]
        &\qquad\quad- \EE[\eta_{j_2} \eta_{j_3} \eta_{j_4}] \EE[\eta_{j_1}] + \EE[\eta_{j_1} \eta_{j_2}] \EE[\eta_{j_3}] \EE[\eta_{j_4}] + \EE[\eta_{j_1} \eta_{j_3}] \EE[\eta_{j_2}] \EE[\eta_{j_4}] \notag \\[1.1mm]
        &\qquad\quad+ \EE[\eta_{j_1} \eta_{j_4}] \EE[\eta_{j_2}] \EE[\eta_{j_3}] + \EE[\eta_{j_2} \eta_{j_3}] \EE[\eta_{j_1}] \EE[\eta_{j_4}] + \EE[\eta_{j_2} \eta_{j_4}] \EE[\eta_{j_1}] \EE[\eta_{j_3}] \notag \\[1.1mm]
        &\qquad\quad+ \EE[\eta_{j_3} \eta_{j_4}] \EE[\eta_{j_1}] \EE[\eta_{j_2}] - 3\, \EE[\eta_{j_1}] \EE[\eta_{j_2}] \EE[\eta_{j_3}] \EE[\eta_{j_4}] \notag \\[1.1mm]
        &\qquad= (r + 3)^{(4)} y_{j_1} y_{j_2} y_{j_3} y_{j_4} - (r + 2)^{(3)} y_{j_1} y_{j_2} y_{j_3} r y_{j_4} - (r + 2)^{(3)} y_{j_1} y_{j_2} y_{j_4} r y_{j_3} \notag \\
        &\qquad\quad- (r + 2)^{(3)} y_{j_1} y_{j_3} y_{j_4} r y_{j_2} - (r + 2)^{(3)} y_{j_2} y_{j_3} y_{j_4} r y_{j_1} + (r + 1)^{(2)} y_{j_1} y_{j_2} r y_{j_3} r y_{j_4} \notag \\
        &\qquad\quad+ (r + 1)^{(2)} y_{j_1} y_{j_3} r y_{j_2} r y_{j_4} + (r + 1)^{(2)} y_{j_1} y_{j_4} r y_{j_2} r y_{j_3} + (r + 1)^{(2)} y_{j_2} y_{j_3} r y_{j_1} r y_{j_4} \notag \\
        &\qquad\quad+ (r + 1)^{(2)} y_{j_2} y_{j_4} r y_{j_1} r y_{j_3} + (r + 1)^{(2)} y_{j_3} y_{j_4} r y_{j_1} r y_{j_2} - 3\, r^4 y_{j_1} y_{j_2} y_{j_3} y_{j_4} \\
        &\qquad= \boxed{3 r (2 + r) y_{j_1} y_{j_2} y_{j_3} y_{j_4}}.
    \end{align*}

\section{Open Problems}\label{sec:open.problems}

    Here are some research questions for the reader that are of interest:

    \begin{itemize}
        \item Using the moment formulas in the present paper, extend to the negative multinomial distribution the local limit theorem, total variation bound and Le Cam distance bound found in Lemma~1, Theorems~3 and~4 of \citet{doi:10.1007/s00184-023-00897-2} for the negative binomial distribution ($d = 1$).
        \item Using the moment formulas in the present paper, study the asymptotic properties of the Bernstein estimator with a negative multinomial kernel, as was carried out for the Bernstein estimator with a multinomial kernel on the simplex in \citet{MR4287788}.
        \item Investigate whether the negative multinomial distribution is a completely monotonic function of its parameters. This question was answered positively by \citet{MR3825458} and \citet{MR4201158} for the multinomial distribution, who showed that it is in fact even logarithmically completely monotonic. The same result was extended to a matrix-parametrized generalization by \citet{MR4460102}.
    \end{itemize}



\section*{Funding}

    F.\ Ouimet is supported financially by a postdoctoral fellowship (CRM-Simons) from the Centre de recherches math\'ematiques (Montr\'eal, Canada) and the Simons Foundation.

\section*{Data availability}

    Not applicable.

\section*{Conflicts of interest}

    The author declares no conflict of interest.

%
%

\addcontentsline{toc}{chapter}{References}


\begin{thebibliography}{999}

\bibitem[\protect\citename{Sibuya et al., }1964]{MR171341}
Sibuya, M.; Yoshimura, I.; Shimizu, R.
\newblock Negative multinomial distribution.
\newblock {\em Ann. Inst. Stat. Math.} \textbf{1964}, \emph{16}, 409--426.
\newblock \href{http://www.ams.org/mathscinet-getitem?mr=MR171341}{{MR171341}}.



\bibitem[\protect\citename{Johnson et al., }1997]{MR1429617}
Johnson, N.L.; Kotz, S.; Balakrishnan, N.
\newblock {\em Discrete Multivariate Distributions};
\newblock Wiley Series in Probability and Statistics: Applied Probability and
  Statistics;
\newblock John Wiley \& Sons, Inc.: New York: NY, USA, 1997.
\newblock \href{http://www.ams.org/mathscinet-getitem?mr=MR1429617}{MR1429617}.



\bibitem[\protect\citename{Fitzmaurice et al., }2004]{MR2063401}
Fitzmaurice, G.M.; Laird, N.M.; Ware, J.H.
\newblock {\em Applied Longitudinal Analysis};
\newblock Wiley Series in Probability and Statistics;
\newblock Wiley-Interscience (John Wiley \& Sons): Hoboken, NJ, USA, 2004.
\newblock \href{http://www.ams.org/mathscinet-getitem?mr=MR2063401}{MR2063401}.



\bibitem[\protect\citename{Cameron and Trivedi, }2013]{MR3155491}
Cameron, A.C.; Trivedi, P.K.
\newblock Regression Analysis of Count Data. In \emph{Econometric Society Monographs}, 2nd ed; Cambridge University Press: Cambridge, UK, 2013; Volume 53.
\newblock \href{http://www.ams.org/mathscinet-getitem?mr=MR3155491}{MR3155491}.



\bibitem[\protect\citename{Haslett et al., }2022]{MR4481433}
Haslett, J.; Parnell, A.C.; Hinde, J.; de~Andrade~Moral, R.
\newblock Modelling excess zeros in count data: A new perspective on modelling
  approaches.
\newblock {\em Int. Stat. Rev.} \textbf{2022}, \emph{90}, 216--236.
\newblock \href{http://www.ams.org/mathscinet-getitem?mr=MR4481433}{MR4481433}.



\bibitem[\protect\citename{B\"{o}ckenholt,
  }1999a]{doi:10.1080/01621459.1999.10474178}
B\"{o}ckenholt, U.
\newblock Analyzing multiple emotions over time by autoregressive negative
  multinomial regression models.
\newblock {\em J. Am. Stat. Assoc.} \textbf{1999a}, \emph{94}, 757--765.
\newblock
  \url{https://www.doi.org/10.1080/01621459.1999.10474178}.



\bibitem[\protect\citename{B\"{o}ckenholt, }1999b]{MR1681133}
B\"{o}ckenholt, U.
\newblock Mixed {INAR}(1) {P}oisson regression models: Analyzing heterogeneity
  and serial dependencies in longitudinal count data.
\newblock {\em J. Econom.} \textbf{1999b}, \emph{89}, 317--338.
\newblock \href{http://www.ams.org/mathscinet-getitem?mr=MR1681133}{MR1681133}.



\bibitem[\protect\citename{Bonett, }1985a]{MR801857}
Bonett, D.G.
\newblock A linear negative multinomial model.
\newblock {\em Stat. Probab. Lett.} \textbf{1985a}, \emph{3}, 127--129.
\newblock \href{http://www.ams.org/mathscinet-getitem?mr=MR801857}{MR801857}.



\bibitem[\protect\citename{Bonett, }1985b]{doi:10.1080/03610928508829007}
Bonett, D.G.
\newblock The negative multinomial logit model.
\newblock {\em Comm. Stat. Theory Methods} \textbf{1985b}, \emph{14}, 1713--1717.
\newblock
  \url{https://www.doi.org/10.1080/03610928508829007}.

\bibitem[\protect\citename{Chiarappa and Hoover, }2021]{MR4242807}
Chiarappa, J.A.; Hoover, D.R.
\newblock Comparative {P}oisson clinical trials of multiple experimental
  treatments vs a single control using the negative multinomial distribution.
\newblock {\em Stat. Med.} \textbf{2021}, \emph{40}, 2452--2466.
\newblock \href{http://www.ams.org/mathscinet-getitem?mr=MR4242807}{MR4242807}.


\bibitem[\protect\citename{Guo, }1996]{doi:10.2307/271020}
Guo, G.
\newblock Negative multinomial regression models for clustered event counts.
\newblock {\em Sociol. Methodol.} \textbf{1996}, \emph{26}, 113--132.
\newblock \url{https://www.doi.org/10.2307/271020}.



\bibitem[\protect\citename{Waller and Zelterman, }1997]{MR1475055}
Waller, L.A.; Zelterman, D.
\newblock Log-linear modeling with the negative multinomial distribution.
\newblock {\em Biometrics} \textbf{1997}, \emph{53}, 971--982.
\newblock \href{http://www.ams.org/mathscinet-getitem?mr=MR1475055}{MR1475055}.

\bibitem[\protect\citename{Zhang et al., }2017]{MR3610402}
Zhang, Y.; Zhou, H.; Zhou, J.; Sun, W.
\newblock Regression models for multivariate count data.
\newblock {\em J. Comput. Graph. Stat.} \textbf{2017}, \emph{26}, 1--13.~
\newblock \href{http://www.ams.org/mathscinet-getitem?mr=MR3610402}{MR3610402}.



\bibitem[\protect\citename{Chen et al., }2020]{doi:10.3390/f11050571}
Chen, Y.; Wu, Y.; Chen, W.; Zhao, T.; Zhang, W.; Shen, T.-J.
\newblock Application of a negative multinomial model gives insight into
  rarity-area relationships.
\newblock {\em Forests} \textbf{2020}, \emph{11}, 571.
\newblock \url{https://www.doi.org/10.3390/f11050571}.


\bibitem[\protect\citename{Bernardoff et al., }2013]{MR3037959}
Bernardoff, P.; Chatelain, F.; Tourneret, J.-Y.
\newblock Masses of negative multinomial distributions: Application to
  polarimetric image processing.
\newblock {\em J. Probab. Stat.} \textbf{2013}, \emph{2013}, 170967.
\newblock \href{http://www.ams.org/mathscinet-getitem?mr=MR3037959}{MR3037959}.

\bibitem[\protect\citename{Kusi-Appiah, }2016]{Kusi-Appiah2016phd}
Kusi-Appiah, A.O.
\newblock {On the Exchangeable Negative Multinomial Distribution and
  Applications to Analysis of RNA-Seq. Data}.
\newblock Ph.{D}. Thesis, The University of Memphis, Memphis, TN, USA, 2016.
\newblock Available online: \url{https://digitalcommons.memphis.edu/etd/1485} {(accessed on 1 July 2023).}



\bibitem[\protect\citename{Mosimann, }1963]{doi:10.2307/2333745}
Mosimann, J.E.
\newblock On the compound negative multinomial distribution and correlations
  among inversely sampled pollen counts.
\newblock {\em Biometrika} \textbf{1963}, \emph{50}, 47--54.
\newblock \url{https://www.doi.org/10.2307/2333745}.



\bibitem[\protect\citename{Afendras and Papathanasiou, }2014]{MR3197132}
Afendras, G.; Papathanasiou, V.
\newblock A note on a variance bound for the multinomial and the negative
  multinomial distribution.
\newblock {\em Naval Res. Logist.} \textbf{2014}, \emph{61}, 179--183.
\newblock \href{http://www.ams.org/mathscinet-getitem?mr=MR3197132}{MR3197132}.

\bibitem[\protect\citename{Bernardoff, }2003]{MR2047690}
Bernardoff, P.
\newblock Which negative multinomial distributions are infinitely divisible?
\newblock {\em Bernoulli} \textbf{2003}, \emph{9}, 877--893.
\newblock \href{http://www.ams.org/mathscinet-getitem?mr=MR2047690}{MR2047690}.



\bibitem[\protect\citename{Bernardoff, }2023]{MR4505544}
Bernardoff, P.
\newblock Domain of existence of the {L}aplace transform of negative
  multinomial distributions and simulations.
\newblock {\em Stat. Probab. Lett.} \textbf{2023}, \emph{193}, 109709.
\newblock \href{http://www.ams.org/mathscinet-getitem?mr=MR4505544}{MR4505544}.

\bibitem[\protect\citename{Evans and Bonett, }1989]{MR1058927}
Evans, M.A.; Bonett, D.G.
\newblock Maximum likelihood estimation for the negative multinomial log-linear
  model.
\newblock {\em Comm. Stat. Theory Methods} \textbf{1989}, \emph{18}, 4059--4065.
\newblock \href{http://www.ams.org/mathscinet-getitem?mr=MR1058927}{MR1058927}.




\bibitem[\protect\citename{Griffiths, }1975]{MR372951}
Griffiths, R.C.
\newblock Orthogonal polynomials on the negative multinomial distribution.
\newblock {\em J. Multivar. Anal.} \textbf{1975}, \emph{5}, 271--277.
\newblock \href{http://www.ams.org/mathscinet-getitem?mr=MR372951}{MR372951}.

\bibitem[\protect\citename{Hamura and Kubokawa, }2020]{MR4122073}
Hamura, Y.; Kubokawa, T.
\newblock Bayesian shrinkage estimation of negative multinomial parameter
  vectors.
\newblock {\em J. Multivar. Anal.} \textbf{2020}, \emph{179}, 104653.
\newblock \href{http://www.ams.org/mathscinet-getitem?mr=MR4122073}{MR4122073}.



\bibitem[\protect\citename{Janardan, }1974]{MR345311}
Janardan, K.G.
\newblock A characterization of multinomial and negative multinomial
  distributions.
\newblock {\em Scand. Actuar. J.} \textbf{1974},  \emph{1974}, 58--62.
\newblock \href{http://www.ams.org/mathscinet-getitem?mr=MR345311}{MR345311}.

\bibitem[\protect\citename{Joshi, }1975]{MR408075}
Joshi, S.W.
\newblock Integral expressions for tail probabilities of the negative
  multinomial distribution.
\newblock {\em Ann. Inst. Stat. Math.} \textbf{1975}, \emph{27}, 95--97.
\newblock \href{http://www.ams.org/mathscinet-getitem?mr=MR408075}{MR408075}.



\bibitem[\protect\citename{Le~Gall, }2006]{MR2255791}
Le~Gall, F.
\newblock The modes of a negative multinomial distribution.
\newblock {\em Stat. Probab. Lett.} \textbf{2006}, \emph{76}, 619--624.
\newblock \href{http://www.ams.org/mathscinet-getitem?mr=MR2255791}{MR2255791}.

\bibitem[\protect\citename{Olkin and Sobel, }1965]{MR207079}
Olkin, I.; Sobel, M.
\newblock Integral expressions for tail probabilities of the multinomial and
  negative multinomial distributions.
\newblock {\em Biometrika} \textbf{1965}, \emph{52}, 167--179.
\newblock \href{http://www.ams.org/mathscinet-getitem?mr=MR207079}{MR207079}.

\bibitem[\protect\citename{Oller and Cuadras, }1985]{MR813445}
Oller, J.M.; Cuadras, C.M.
\newblock Rao's distance for negative multinomial distributions.
\newblock {\em Sankhy\={a} Ser. A} \textbf{1985}, \emph{47}, 75--83.
\newblock \href{http://www.ams.org/mathscinet-getitem?mr=MR813445}{MR813445}.

\bibitem[\protect\citename{Ouimet, }2018]{MR3825458}
Ouimet, F.
\newblock Complete monotonicity of multinomial probabilities and its
  application to {B}ernstein estimators on the simplex.
\newblock {\em J. Math. Anal. Appl.} \textbf{2018}, \emph{466}, 1609--1617.
\newblock \href{http://www.ams.org/mathscinet-getitem?mr=MR3825458}{MR3825458}.

\bibitem[\protect\citename{Panaretos, }1981]{MR656170}
Panaretos, J.
\newblock A characterization of the negative multinomial distribution.
\newblock In  {\em Statistical Distributions in
  Scientific Work};
\newblock NATO Advanced Study Institutes Series, Volume 79;
\newblock Springer: Dordrecht, The Netherlands, 1981; Volume 4, pp. 331--339.
\newblock \href{http://www.ams.org/mathscinet-getitem?mr=MR656170}{MR656170}.



\bibitem[\protect\citename{Rufo et al., }2007]{MR2370884}
Rufo, M.J.; P\'{e}rez, C.J.; Mart\'{\i}n, J.
\newblock Bayesian analysis of finite mixtures of multinomial and
  negative-multinomial distributions.
\newblock {\em Comput. Stat. Data Anal.} \textbf{2007}, \emph{51}, 5452--5466.
\newblock \href{http://www.ams.org/mathscinet-getitem?mr=MR2370884}{MR2370884}.



\bibitem[\protect\citename{Sagae and Tanabe, }1992]{MR1219279}
Sagae, M.; Tanabe, K.
\newblock Symbolic {C}holesky decomposition of the variance-covariance matrix
  of the negative multinomial distribution.
\newblock {\em Stat. Probab. Lett.} \textbf{1992}, \emph{15}, 103--108.
\newblock \href{http://www.ams.org/mathscinet-getitem?mr=MR1219279}{MR1219279}.



\bibitem[\protect\citename{Withers and Nadarajah, }2014b]{MR3263054}
Withers, C.S.; Nadarajah, S.
\newblock The spectral decomposition and inverse of multinomial and negative
  multinomial covariances.
\newblock {\em Braz. J. Probab. Stat.} \textbf{2014b}, \emph{28}, 376--380.
\newblock \href{http://www.ams.org/mathscinet-getitem?mr=MR3263054}{MR3263054}.

\bibitem[\protect\citename{Charalambides, }2021]{MR4358040}
Charalambides, C.A.
\newblock {$q$}-multinomial and negative {$q$}-multinomial distributions.
\newblock {\em Comm. Stat. Theory Methods} \textbf{2021}, \emph{50}, 5873--5898.
\newblock \href{http://www.ams.org/mathscinet-getitem?mr=MR4358040}{MR4358040}.




\bibitem[\protect\citename{Dhar, }1985]{doi:10.1080/03610929508831473}
Dhar, S.K.
\newblock Extension of a negative multinomial model.
\newblock {\em Comm. Stat. Theory Methods} \textbf{1985}, \emph{24}, 39--57.
\newblock
  \url{https://www.doi.org/10.1080/03610929508831473}.



\bibitem[\protect\citename{Patil, }1966]{MR208734}
Patil, G.P.
\newblock On multivariate generalized power series distribution and its
  application to the multinomial and negative multinomial.
\newblock {\em Sankhy\={a} Ser. A} \textbf{1966}, \emph{28}, 225--238.
\newblock \href{http://www.ams.org/mathscinet-getitem?mr=MR208734}{MR208734}.

\bibitem[\protect\citename{Withers and Nadarajah, }2014a]{MR3168930}
Withers, C.S.; Nadarajah, S.
\newblock Cumulants of multinomial and negative multinomial distributions.
\newblock {\em Stat. Probab. Lett.} \textbf{2014a}, \emph{87}, 18--26.
\newblock \href{http://www.ams.org/mathscinet-getitem?mr=MR3168930}{MR3168930}.

\bibitem[\protect\citename{Graham et al., }1994]{MR1397498}
Graham, R.L.; Knuth, D.E.; Patashnik, O.
\newblock {\em Concrete Mathematics}, 2nd ed.;
\newblock Addison-Wesley Publishing Company: Reading, MA, USA, 1994.
\newblock \href{http://www.ams.org/mathscinet-getitem?mr=MR1397498}{MR1397498}.

\bibitem[\protect\citename{Ouimet, }2023]{doi:10.1007/s00184-023-00897-2}
Ouimet, F.
\newblock A refined continuity correction for the negative binomial
  distribution and asymptotics of the median.
\newblock {\em Metrika} \textbf{2023}, 23.
\newblock
  \url{https://www.doi.org/10.1007/s00184-023-00897-2}.

\bibitem[\protect\citename{Ouimet, }2021]{MR4287788}
Ouimet, F.
\newblock Asymptotic properties of {B}ernstein estimators on the simplex.
\newblock {\em J. Multivar. Anal.} \textbf{2021}, \emph{185}, 104784.
\newblock \href{http://www.ams.org/mathscinet-getitem?mr=MR4287788}{MR4287788}.

\bibitem[\protect\citename{Qi et al., }2020]{MR4201158}
Qi, F.;, Niu, D.-W.; Lim, D.; Guo, B.-N.
\newblock Some logarithmically completely monotonic functions and inequalities
  for multinomial coefficients and multivariate beta functions.
\newblock {\em Appl. Anal. Discret. Math.} \textbf{2020}, \emph{14}, 512--527.
\newblock \href{http://www.ams.org/mathscinet-getitem?mr=MR4201158}{MR4201158}.

\bibitem[\protect\citename{Ouimet and Qi, }2022]{MR4460102}
Ouimet, F.; Qi, F.
\newblock Logarithmically complete monotonicity of a matrix-parametrized
  analogue of the multinomial distribution.
\newblock {\em Math. Inequal. Appl.} \textbf{2022}, \emph{25}, 703--714.
\newblock \href{http://www.ams.org/mathscinet-getitem?mr=MR4460102}{MR4460102}.

\end{thebibliography}
\end{document}